\documentclass[sn-mathphys-ay]{sn-jnl}% Math and Physical Sciences Author Year Reference Style
\usepackage{caption} 
\usepackage{graphicx}%
\usepackage{multirow}%
\usepackage{amsmath,amssymb,amsfonts}%
\usepackage{amsthm}%
\usepackage{mathrsfs}%
\usepackage[title]{appendix}%
\usepackage{xcolor}%
\usepackage{textcomp}%
\usepackage{manyfoot}%
\usepackage{booktabs}%
\usepackage{algorithm}%
\usepackage{algorithmic}
\usepackage{listings}%
\usepackage{longtable}  
\renewcommand{\tablename}{TABLE}
\renewcommand{\abstractname}{}
\renewcommand{\thesection}{\arabic{section}.}
\renewcommand{\thesubsection}{\arabic{section}.\arabic{subsection}}
\renewcommand{\thesubsubsection}{\arabic{section}.\arabic{subsection}.\arabic{subsubsection}}

\theoremstyle{thmstyleone}%
\newtheorem{theorem}{Theorem}%  meant for continuous numbers
\newtheorem{lemma}{Lemma}% 

\theoremstyle{thmstyletwo}%

\theoremstyle{thmstylethree}%
\newtheorem{definition}{Definition}%

\begin{document}
\title{\textbf{A Second-Order Algorithm Based on Affine Scaling Interior-Point Methods for nonlinear Optimisation with bound constraints}}

\author*[1]{\fnm{Yonggang} \sur{Pei}}\email{peiyg@htu.edu.cn}

\author[1]{\fnm{Yubing} \sur{Lin}}\email{2401183029@stu.htu.edu.cn}
\equalcont{These authors contributed equally to this work.}

\author*[2]{\fnm{Mauricio Silva} \sur{Louzeiro}}\email{mauriciolouzeiro@ufg.br}
\equalcont{These authors contributed equally to this work.}

\author[3]{\fnm{Detong} \sur{Zhu}}\email{dtzhu@shnu.edu.cn}
\equalcont{These authors contributed equally to this work.}

\affil*[1]{\textit{\orgdiv{School of Mathematics and Statistics}, \orgname{Henan Normal University}, \city{Xinxiang}, \postcode{453007}, China}}
\affil[2]{\textit{\orgdiv{Instituto de Matemática e Estatística}, \orgname{Universidade Federal de Goiás}, \city{Avenida Esperança}, \postcode{74690-900}, Brazil}}
\affil[3]{\textit{\orgdiv{Mathematics and Science College}, \orgname{Shanghai Normal University}, \city{Shanghai}, \postcode{200234}, China}}

\abstract{The homogeneous second-order descent method (Zhang et al. 2025, Mathematics of Operations Research) was initially proposed for unconstrained optimisation problems. HSODM shows excellent performance with respect to the global complexity rate among a certain broad class of second-order methods. In this paper, we extend HSODM to solve nonlinear optimisation problems with bound constraints and propose a second-order algorithm based on affine scaling interior-point methods (SOBASIP).
In each iteration, an appropriate affine matrix is introduced to construct an affine scaling subproblem based on the optimality conditions of the problem. To obtain a valid descent direction similar to HSODM, we utilise the homogenisation technique to transform the scaling subproblem into an Ordinary Homogeneous Model (OHM), which is essentially an eigenvalue problem that can be solved efficiently.
The descent direction is constructed from the optimal solution to the OHM, and then backtracking line search is used to determine the new iteration point. Theoretical analysis establishes that SOBASIP achieves a global iteration complexity of $O(\epsilon^{-3/2})$ for finding an $\epsilon$-approximate second-order stationary point and converges locally at a superlinear rate under certain conditions. Numerical results demonstrate that the proposed method exhibits satisfactory performance.}

\keywords{bound constraints; line-search method; affine scaling; interior-point method; global convergence.}
\maketitle

\section{Introduction}\label{section1}
In this paper, we focus on the nonlinear minimisation problem with bound constraints
\begin{subequations}\label{f}
\begin{align}
\underset{x \in \mathbf{R}^n}{\text{minimize}} \quad &~ f(x) \\
\text{subject to} \quad &~ l \leq x \leq u,
\end{align}
\end{subequations}
where $ l \in \{ \mathbf{R} \bigcup \{ -\infty \}\}^n$, $ u \in \{ \mathbf{R} \bigcup \{ +\infty \}\}^n$, $l<u$, and $f:\mathbf{R}^n \rightarrow \mathbf{R} $. We denote the feasible set $\mathcal{F} := \{x: l \leq x \leq u\}$ and the strict interior $ \mathrm{int} (\mathcal{F}) := \{x: l < x < u\}$.

Optimisation problems where variables are subjected to bounds are probably one of the most common types of constrained optimisation problems encountered in practical applications \cite{more47,glowinski48,birgin49,ciarlet50,birgin51,glunt52}. 

A variety of algorithms have been developed to solve this type of problem.
%\cite{bjorck5,coleman6,conn7,fletcher8,yang9,more10}.Early methods tended to be of the active set variety \cite{goldfarb39}, in which a sequence of problems are solved.
Early methods tended to be of the active set variety in which a sequence of problems are solved \cite{conn46}.
%In recent papers \cite{birgin40,birgin41,burdakov41}, the efficiency of active-set methods has been emphasized.
However, a significant drawback of such methods for large-scale problems appears to be that the active sets can only change slowly, and many iterations are necessary to correct for a bad initial choice \cite{conn46}.
This motivates a lot of studies on gradient projection methods \cite{goldstein27,levitin28,mccormick29}, which allow the active set to change rapidly from iteration to iteration, and are particularly effective for solving nonlinear programs with bound constraints \cite{birgin41,birgin40,burdakov41}. Nevertheless, the slow convergence rate of the gradient projection method is an obvious drawback \cite{calama53}.
%An efﬁcient technique for solving the nonlinear program with bound constraints is the gradient projection method, proposed  by  Goldstein \cite{goldstein27}, Levitin and Polyak \cite{levitin28}, and more recently, in a less general context, by McCormick \cite{mccormick29}. 
%However, the slow convergence rate of the gradient projection method is an obvious drawback.
%interior-point methods are taken into account in this paper. Interior-point methods are widely recognized as one of the most powerful algorithms for large-scale inequality constrained optimization \cite{forsgren15, nemirovski16, nocedal17}.
%Among these interior-point methods, 
%The affine scaling interior-point method, introduced by Dikin \cite{dikin18} and further developed by Karmarkar in \cite{karmarkar19}, is a different approach to problem \eqref{f}. Furthermore, it is the only interior-point strategy that monotonically decreases the value of the original objective function to reach the solution. 
%In 1994, Thomas F. in \cite{coleman20} proposed an interior-reflective Newton method, where a new affine scaling transformation is introduced, to solve bound-constrained nonlinear minimization problems. Further research efforts focused on this strategy can be found in references \cite{heinkenschloss30,kanzow31,ulbrich32}
%Thomas F. also introduced a trust region and affine scaling interior point method (TRAM) for a general nonlinear minimization with linear inequality constraints in 2000 \cite{coleman21}.
The affine-scaling interior-point method of Coleman and Li \cite{coleman20,coleman21,coleman38} offers an alternative approach to solving problem \eqref{f}. It is based on a reformulation of the necessary optimality conditions obtained by multiplication with a scaling matrix. Notably, without assuming strict complementarity, this method converges superlinearly or quadratically, for a suitable choice of the scaling matrix, when the strong second-order sufficient optimality condition holds \cite{heinkenschloss54}.
Most existing methods for solving bound constrained optimisation problems are limited to finding solutions that satisfy the first-order stationary conditions \cite{birgin40,birgin41,burdakov41,li42,yuan43,facchinei44,yuan45}.
Although first-order methods offer computational efficiency, they have a significant limitation that the critical point they converge to may be a saddle point \cite{zhu34,mokhtari35}.
%In contrast, second-order methods can not only overcome this limitation but also guarantee faster convergence.

Therefore, it is natural to consider higher-order methods that search for the second-order stationary points.
In contrast, second-order methods can not only overcome the drawbacks of first-order methods but also guarantee a faster convergence rate \cite{shao33,zhu34,mokhtari35}.
Nesterov and Polyak \cite{nesterov11} showed the first $O(\epsilon^{-3/2})$ complexity bound of second-order methods by cubic regularisation (CR), which modifies Newton's method by adding a cubic regularisation term to its quadratic approximation model. As an improved version of CR, the adaptive regularisation algorithm using cubics (ARC) proposed by Cartis et al. \cite{cartis12, cartis13}, maintains the same iteration complexity of $O(\epsilon^{-3/2})$ as CR. Moreover, a modified algorithm named trust region algorithm with contractions and expansions (TRACE) \cite{curtis22,curtis23} also realises this complexity by nonlinearly adjusting the trust-region radius. All the methods mentioned above require solving the Newton system, and their computational cost is typically high, making them less efficient. Motivated by the homogenisation trick in quadratic programming \cite{sturm24,he25}, Zhang et al. provided a homogeneous second-order descent method (HSODM) in \cite{zhang14,he37}. This new second-order method converges to an $\epsilon$-approximate second-order stationary point, and is shown to have an $O(\epsilon^{-3/2})$ iteration complexity for nonconvex problems, which is optimal among a certain broad class of second-order methods. 
%In this paper, we aim to extend this method to solve optimization problem \eqref{f}

The affine scaling interior-point method, introduced by Dikin \cite{dikin18} and further developed by Karmarkar in \cite{karmarkar19}, is the only interior-point strategy that monotonically decreases the value of the original objective function to reach the solution. Further research efforts focusing on this strategy for solving nonlinear programs with bound constraints can be found in \cite{heinkenschloss30,kanzow31,ulbrich32}. Given the bound constraints in problem \eqref{f}, the affine scaling interior-point method is considered in this paper.
%Based on this affine scaling framework, Thomas F. in \cite{coleman20} proposed an interior-reflective Newton method for bound-constrained nonlinear minimization, incorporating a novel affine scaling transformation. He further developed an interior trust region approach for the bound-constrained nonlinear minimization problem in \cite{coleman21}. Given the bound constraints in problem \eqref{f}, the affine scaling interior-point method is considered in this paper.
%A key common feature of these two methods \cite{coleman20,coleman21} is their use of an appropriate scaling matrix to construct a quadratic model, eliminating explicit constraint handling. 

Inspired by the above mentioned work, this paper proposes a second-order algorithm based on affine scaling interior-point methods (SOBASIP) for problem \eqref{f}. From the optimality conditions, we introduce a suitable affine matrix and combine it with the Newton step to establish an affine scaling subproblem. The homogenisation technique is then employed to transform the affine scaling subproblem into an Ordinary Homogeneous Model (OHM). In each iteration, we construct the descent direction based on the optimal solution of the OHM, which corresponds to the leftmost eigenvector of the aggregated matrix. Then, combining the backtracking line search method, we show that SOBASIP achieves $O(\epsilon^{-3/2})$ iteration complexity for finding an $\epsilon$-approximate second-order stationary point, and also exhibits local superlinear convergence rate. Although the problem in this paper can be viewed as a special case of that in \cite{he2023}, which develops a Newton-CG based barrier method for finding a second-order stationary point of nonconvex conic optimisation with affine subspace and convex cone constraints, directly applying that framework would lead to an increase in the number of variables. This paper adopts a different analytical framework while maintaining the same $O(\epsilon^{-3/2})$ iteration complexity.

Our main contributions are as follows.
\begin{itemize}
\item We propose a novel second-order algorithm based on affine scaling interior-point methods, which is an extended application of the HSODM in \cite{zhang14} to solve bound constrained optimisation problems. This provides a new perspective for addressing such problems.
\item Under mild assumptions, we establish that the proposed method achieves an iteration complexity of $O(\epsilon^{-3/2})$ to find an $\epsilon$-approximate second-order stationary point, which is a competitive complexity bound for bound-constrained optimisation. Moreover, unlike many second-order approaches that require solving expensive Newton systems, our approach only needs to solve an eigenvalue minimisation problem per iteration. 
\item Under appropriate assumptions, the algorithm SOBASIP exhibits a local superlinear convergence rate with parameter $\delta=0$, which is a very desirable result for this class of problems.
\end{itemize} 
%He and Lu \cite{he2023} proposed a Newton-CG based barrier method for finding a second-order stationary point of nonconvex conic optimization with affine subspace and convex cone constraints. It gives a positive answer to the open question by O’Neill and Wright \cite{o2021}, extending the log-barrier Newton-CG method for bound constraints to general conic constraints. Although the problem considered in our paper can be viewed as a special case of that in \cite{he2023} and can be reformulated into the same form, such a reformulation will change the original structural properties of the problem. Therefore, we adopt a different analytical framework. Despite the different analysis approaches, both methods achieve the same iteration complexity $O(\epsilon^{-3/2})$ with guaranteed operation complexity.
%In addition, the research status of existing methods regarding the approximate second-order stationary points for nonconvex conic optimization problems were also reviewed in \cite{he2023} (see the references cited therein).
%has a global convergence rate of $O(\epsilon^{-3/2})$ to find an $\epsilon$-approximate second-order stationary point. Consequently, SOBASIP  achieves a local superlinear convergence rate by setting the perturbation parameter $\delta=0$ under appropriate assumptions.

The rest of this paper is organised as follows. In Section \ref{section2}, we construct OHM based on the affine scaling subproblem, then solve it as an eigenvalue problem, and introduce the corresponding SOBASIP in Algorithm \ref{SOBASIP}. Some preliminary results are presented in Section \ref{section3}. In Section \ref{section4} and \ref{section5}, we provide an analysis of the global and local convergence of SOBASIP. The result indicates that SOBASIP attains an iteration complexity of $O(\epsilon^{-3/2})$ when seeking an $\epsilon$-approximate second-order stationary point. If the algorithm does not early terminate, it converges at a local superlinear convergence rate. In Section \ref{section6}, numerical results of Algorithm \ref{SOBASIP} are provided.
\subsection{Notations}\label{1.1}
In this subsection, we introduce the notations throughout this paper.

Let $\| \cdot \|$ be the standard Euclidean norm on $\mathbf{R}^n$. The closed ball centered at $x$ with radius $r$ is denoted 
$B(x,r) = \left\{ y \in \mathbb{R}^n \mid \|y - x\| \leq r \right\}$. Given a matrix 
$A \in \mathbf{S}^{n \times n}$, $\|A\|$ stands for its induced $l_2$-norm, while 
$\lambda_1(A)$, $\lambda_2(A), \dots, \lambda_{\text{max}}(A)$ denote the distinct eigenvalues of $A$ arranged in ascending order.
For the vector-valued function $h: \mathbf{R}^n \rightarrow \mathbf{R}^n$, we define the i-th component of 
$h$ as $h^i$. $ \lceil \cdot \rceil$ in this paper denotes the ceiling function.
We employ the standard order notations $O$, $\Omega$, and $\Theta$ as they are commonly used. Specifically, for two constants $A$ and $B$, we state that $A = O(B)$ if there exists a positive constant $c$ such that $A \leq c \cdot B$. Similarly, $A = \Omega(B)$ if there exists a positive constant $c$ where $A \geq c \cdot B$. We define $A = \Theta(B)$ when both $A = O(B)$ and $A = \Omega(B)$ hold. The notation $[a;b]$ is used to represent the vertical concatenation of arrays or numbers.

\section{Development of the algorithm }\label{section2}
%In this paper, we focus on the bound-constrained optimization problems of the form 
%\begin{subequations}\label{f}
%\begin{align}
%\underset{x \in \mathbf{R}^n}{\text{minimize}} \quad &~ f(x) \\
%\text{subject to} \quad &~ l \leq x \leq u,
%\end{align}
%\end{subequations}
%where $ l \in \{ \mathbf{R}^n \bigcup \{ -\infty \}\}^n$, $ u \in \{ \mathbf{R}^n \bigcup \{ +\infty \}\}^n$, $l<u$, and $f:\mathbf{R}^n \rightarrow \mathbf{R} $. We denote the feasible set $\mathcal{F} := \{x: l \leq x \leq u\}$ and the strict interior $ \mathrm{int} (\mathcal{F}) := \{x: l < x < u\}$.
In this section, we first establish OHM from the affine scaling subproblem, with the ideas presented in \cite{zhang14}. We then elaborate on the core idea of the proposed algorithm in further detail, and conclude this section by providing a comprehensive formal description of the entire algorithm.

We use $g(x)$ and $H(x)$ to represent the gradient and Hessian matrix of the objective function $f(x)$, i.e., $g(x) := \nabla f(x), H(x):={\nabla}^2 f(x)$. Constraints make it difficult to formulate a similar subproblem for which a global solution can be computed by existing software. However, the difficulties caused by constraints in problem \eqref{f} can be addressed by applying affine scaling to construct an approximate quadratic function. Thus, we first define a vector function $ v(x): \mathbf{R}^n \rightarrow \mathbf{R}^n$ as follows.
\begin{definition}\label{definition1}
The vector $v(x)=(v^1(x), v^2(x),..., v^i(x)) \in \mathbf{R}^n$ is defined as follows. For each component $1 \leq i \leq n$,
\begin{equation}
v^i(x):=
\left\{
\begin{aligned}
&x^i - u^i, & \text{ if } g^i(x)<0 \text{ and } u^i<\infty,\\
&x^i - l^i, & \text{ if } g^i(x)\geq 0 \text{ and }l^i>-\infty, \\
&-1, & \text{ if } g^i(x)<0 \text{ and } u^i=\infty\\
&1, &\text{ if } g^i(x)\geq 0 \text{ and } l^i=-\infty.
\end{aligned}
\right.
\end{equation}
\end{definition}
We remark that for any $s\in \mathbf{R}^n$, $\mathrm{diag}(s)$ denotes an $n$-by-$n$ diagonal matrix, and the diagonal entries of this matrix are the components of vector $s$ in their original sequence. Using this notation, we define an affine scaling matrix
\begin{equation}\label{D}
  D(x) := \mathrm{diag}(|v(x)|^{-\frac{1}{2}}),
\end{equation}
i.e., $D^{-2}(x)$ is a diagonal matrix with the $i$-th diagonal component equal to $|v^i(x)|$.

Assuming feasibility and $g_{\ast}:=g(x_{\ast})$, first-order necessary optimality conditions for $x_{\ast}$ to be a local minimizer are
\begin{equation}\label{first-order}
\left\{
\begin{aligned}
g_{\ast}^i = 0, & \text{ if } l^i<x_{\ast}^i<u^i,\\
g_{\ast}^i \leq 0, & \text{ if } x_{\ast}^i=u^i, \\
g_{\ast}^i \geq 0, & \text{ if } x_{\ast}^i=l^i.
\end{aligned}
\right.
\end{equation}
Similar to what in \cite{coleman20,coleman38}, we consider the following system
\begin{equation}\label{sys}
  D(x)^{-2}g(x)=0.
\end{equation}
It is clear that system \eqref{sys} is equivalent to the first-order necessary conditions we referred to previously. System \eqref{sys} is continuous but not everywhere differentiable. Nondifferentiability occurs when $v^i=0$. We may prevent this situation from occurring by restricting $x_k \in \mathrm{int} (\mathcal{F})$. However, if $g^i(x)=0$, $v^i$ is discontinuous on such a point but $D(x)^{-2}g(x)=0$ is continuous. Moreover, Coleman and Li show that it is possible to generate a second-order Newton process for system \eqref{sys} in \cite{coleman20}.

Assume $x_k\in \mathrm{int} (\mathcal{F})$, the Newton step of \eqref{sys} satisfies
\begin{equation}\label{niudunbu}
\left(D^{-2}_kH_k+\operatorname{diag}(g_k) J^v_k\right)d_k=-D^{-2}_kg_k
\end{equation}
where $g_k:=g(x_k),H_k:=H(x_k), D_k:=D(x_k),$ and $J^v_k \in \mathbf{R}^{n\times n} $ is the Jacobian matrix of $|v(x)|$ at $x_k$ whenever $|v(x)|$ is differentiable.

Multiplying the two sides of \eqref{niudunbu} by $D_k$ and defining
\begin{equation}\label{fangshedk}
	\left\{
	\begin{aligned}
		&\overline{d}_k:=D_kd_k,\\
		&\overline{H}_k:=D^{-1}_kH_kD^{-1}_k,\\
		&\overline{C}_k:=\operatorname{diag}(g_k)J_k^v,
	\end{aligned}
	\right.
\end{equation}
we have that
\begin{equation*}\label{}
\left(D^{-1}_kH_kD^{-1}_k+\operatorname{diag}(g_k) J^v_k\right)\overline{d}_k=-D^{-1}_kg_k,
\end{equation*}
which is equivalent to the following system
\begin{equation*}\label{}
\left(\overline{H}_k+\overline{C}_k\right)\overline{d}_k=-\overline{g}_k,
\end{equation*}
where $\overline{g}_k:=D^{-1}_kg_k$.
Then, based on the above system, we can establish an affine scaling subproblem
\begin{equation}\label{ziwenti}
\underset{\overline{d} \in \mathbf{R}^n}{\text{minimize   }} 
\overline{g}_k^T\overline{d}+\frac{1}{2}\overline{d}^T\overline{B}_k\overline{d},
\end{equation}
where $\overline{B}_k$ is $\overline{H}_k+\overline{C}_k$.

Inspired by \cite{zhang14}, we attempt to transform the affine scaling subproblem \eqref{ziwenti} into a homogeneous quadratic model via homogenisation trick. Let $\overline{d}:=s/t$,  
\begin{align}\label{qicihua}
t^2\left(\overline{g}_k^T({s/t}) + \frac{1}{2} ({s/t})^T\overline{B}_k({s/t}) - \frac{1}{2} \delta\right)
&=t\cdot \overline{g}_k^Ts + \frac{1}{2} s^T\overline{B}_k s - \frac{1}{2}{\delta} t^2
=\frac{1}{2} \begin{bmatrix}
                      s \\ t
                  \end{bmatrix}^T
F_k
    \begin{bmatrix}
        s \\ t
    \end{bmatrix},
\end{align}
where
\begin{align}\label{F_k}
 & F_k := \begin{bmatrix}
                                     \overline{B}_k   & \overline{g}_k    \\
                                     \overline{g}_k^T & -\delta
                                 \end{bmatrix}.
\end{align}                                 
The second equation is what we refer to as the homogeneous quadratic model. Similar to \cite{zhang14}, a ball constraint $\|[s;t]\|\le 1$ is added to model \eqref{qicihua}. Moreover, by setting $\overline{d}:=s/t$, the homogenised model becomes equivalent to the $t^2$ scaled version of the affine scaling subproblem \eqref{ziwenti}, where the equivalence is valid up to a constant.

Given an iterate $x_k \in \mathbb{R}^n$, we define the Ordinary Homogeneous Model (OHM) as follows:
\begin{equation}
    \label{hsodm.subproblem}
    \begin{aligned}
        \min_{\|[s; t]\| \le 1}
        \psi_k(s, t; \delta) := ~ \begin{bmatrix}
        s \\ t
    \end{bmatrix}^T
    \begin{bmatrix}
        \overline{B}_k   & \overline{g}_k    \\
        \overline{g}_k^T & -\delta
    \end{bmatrix}
    \begin{bmatrix}
        s \\ t
    \end{bmatrix},~ s\in \mathbb{R}^n, t \in \mathbb{R},
    \end{aligned}
\end{equation}
where $\delta \geq 0$ is a predefined constant.

Denote the optimal solution of problem \eqref{hsodm.subproblem} as $[s_k; t_k]$. In fact, subproblem \eqref{hsodm.subproblem} is an eigenvalue problem in which $[s_k; t_k]$ is the eigenvector corresponding to the smallest eigenvalue of $F_k$. Therefore, we can solve this subproblem by using eigenvector finding procedure.
 %see \cite{carmon_accelerated_2018, kuczynski_estimating_1992, royer_complexity_2018}.

After solving \eqref{hsodm.subproblem}, a descent direction $d_k$ is determined by $d_k=D^{-1}_k \overline{d}_k$, where $\overline{d}_k$ is constructed based on the optimal solution $[s_k; t_k]$. Evidently, $\overline{d}_k=s_k/t_k$ is a wish choice. However, when $t_k=0$, an undesirable situation arises where $d_k$ tends to infinity. To prevent this scenario, we choose the truncated direction $s_k$ as the descent direction if $|t_k|$ is sufficiently small. The reason is that matrix $\overline{B}_k$ dominates the homogenised model when $t_k$ is small. Otherwise, we directly choose $s_k/t_k$ as the descent direction because predefined constant $\delta$ becomes important in this case. In this paper, we use$\sqrt{1/(1+\Delta^2)}$ and $\nu$ as criteria to judge whether $t_k$ is very small, where $\Delta$ and $\nu$ are predefined constants of the algorithm. Once the descent direction has been determined, we employ the backtracking line search method to guarantee an sufficient decrease of the model.

However, to achieve a balance between advancing along the search direction $d_k$ for objective function optimisation and maintaining the iterate points within the feasible region, before starting the backtracking line search, we define
\begin{equation}\label{maxstepsize}
\alpha_k^{\text{max}} := \min \bigg\{\max\bigg\{ \frac{l_i-x_k^i}{d_k^i}, \frac{u_i - x_k^i}{d_k^i}\bigg\}:1\leq i\leq n \bigg\}.
\end{equation}

For the line search, we utilise a backtracking subroutine to determine the stepsize $\alpha_k$, ensuring it produces a sufficient decrease. Here, a basic sufficient decrease requirement for $\alpha_k\in(0,1]$ is given as follows
\begin{equation}\label{decrease condition}
f(x_k+\alpha_k d_k)-f(x_k) \leq -\frac{\gamma}{6}{\alpha}_k^3 \|\overline{d}_k\|^3.
\end{equation}

Based on the above discussion, we give the whole algorithm as follows.
\begin{algorithm}
\caption{SOBASIP}
\label{SOBASIP}
\begin{algorithmic}
\REQUIRE
  Initial iterate $x_0 \in \mathrm{int} (\mathcal{F})$, 
  $\nu \in (0,1/2)$, $\gamma>0$, $\beta \in (0,1)$, $\tau \in (0,1)$, $\Delta=\Theta(\sqrt{\epsilon})$ and 
  $\epsilon>0$ is sufficiently small.

\FOR{$k=1,2,\dots$} 
  \STATE Compute $f_k$, $g_k$, $D_k^{-1}$, $H_k$, $B_k$.
  
  % Step 2: 求解子问题
  \STATE Compute $[s_k; t_k]$ by solving subproblem \eqref{hsodm.subproblem}.
  
  \IF{$|t_k| > \sqrt{\frac{1}{1+\Delta^2}}$}
    \STATE Set $\overline{d}_k = s_k / t_k$.
    \STATE Compute $d_k=D_k^{-1}\overline{d}_k $.
    \STATE Set $x_{k+1} = x_k + d_k$.
    \STATE \textbf{return} $x_{k+1}$
  \ENDIF
    \IF{$|t_k| \geq \nu$}
        \STATE Set $\overline{d}_k = s_k / t_k$.
    \ELSE
        \STATE Set $\overline{d}_k = \text{sign}(-g_k^T s_k) \cdot s_k$.
    \ENDIF
    \STATE Compute $d_k=D_k^{-1}\overline{d}_k$.
      
    % 线搜索
    \STATE Compute $\alpha_k^{\mathrm{max}}$ by \eqref{maxstepsize}.
    \STATE Set $j=0$, $\alpha_{k,0} =\tau \cdot \min\left\{1, \alpha_k^{\mathrm{max}} \right\}$.
    \WHILE{true}
    \STATE Compute $\Delta f = f(x_k + \alpha_{k,j} d_k) - f(x_k)$.
    \IF{$\Delta f \leq -\frac{\gamma}{6}{\alpha}_{k,j}^3 \|\overline{d}_k\|^3$} % 满足条件：退出循环
        \STATE Set $\alpha_k = \alpha_{k,j}$.
        \STATE \textbf{break} 
    \ENDIF
    \STATE Set $\alpha_{k,j+1} = \beta \cdot \alpha_{k,j}$. % 不满足：衰减步长
    \STATE Set $j = j + 1$.
    \ENDWHILE
  
  % 线搜索满足后：更新迭代点
  \STATE $x_{k+1} = x_k + \alpha_k d_k$.
\ENDFOR 
\end{algorithmic}
\end{algorithm}

%\begin{algorithm}
%\caption{Homogeneous Second-Order Descent Method (HSODM)}
%\label{alg:hsodm}
%\textbf{Data:} initial point $x_1$, $\nu \in (0, 1/2)$, $\Delta = \Theta(\sqrt{\epsilon})$
%\begin{algorithmic}[1] % [1] 表示显示行号
%\For{$k = 1, 2, \cdots$}
%    \State Solve the subproblem \eqref{hsodm.subproblem}, and obtain the solution $[v_k; t_k]$;
%    \If{$|t_k| > \sqrt{1/(1 + \Delta^2)}$} \Comment{small value case}
%        \State $\overline{d}_k \gets v_k / t_k$;
%        \State Update $x_{k+1} \gets x_k + \overline{d}_k$;
%        \State (Early) Terminate (or set $\delta = 0$ and proceed);
%    \EndIf
%    \If{$|t_k| \geq \nu$} \Comment{large value case (a)}
%        \State $\overline{d}_k \gets v_k / t_k$;
%    \Else \Comment{large value case (b)}
%        \State $\overline{d}_k \gets \text{sign}(-\overline{g}_k^T v_k) \cdot v_k$;
%    \EndIf
%    \State Choose a stepsize $\alpha_k$ by the fixed-radius strategy or the line search strategy (see Algorithm 2);
%    \State Update $x_{k+1} \gets x_k + \alpha_k \cdot \overline{d}_k$;
%\EndFor
%\end{algorithmic}
%\end{algorithm}

\section{Preliminary Results}\label{section3}
Before establish the global convergence, we present some preliminary analysis of the homogenised quadratic model. First, we analyse the relationship between the smallest eigenvalues of the $\overline{B}_k$ and the aggregated matrix $F_k$, and the perturbation parameter $\delta$ in the following Lemma.
\begin{lemma}[Relationship between $\lambda_1(F_k)$, $\lambda_1(\overline{B}_k)$ and $\delta$]
    \label{lemma1}
    Let $\lambda_1(\overline{B}_k)$ and $\lambda_1(F_k)$ be the smallest eigenvalue of $\overline{B}_k$ and $F_k$ respectively. If $\overline{g}_k\neq 0$ and $\overline{B}_k \neq 0$, then 
    $\lambda_1(F_k) < -\delta$ and $\lambda_1(F_k) \leq \lambda_1(\overline{B}_k)$ hold.
\end{lemma}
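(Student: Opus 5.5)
The plan is a direct Rayleigh-quotient argument: by the Courant–Fischer characterisation, $\lambda_1(F_k)=\min_{\|[s;t]\|=1}[s;t]^TF_k[s;t]$, so each inequality follows once we exhibit a suitable unit test vector. Both $\overline{B}_k$ and $F_k$ must be symmetric for this. $\overline{H}_k=D_k^{-1}H_kD_k^{-1}$ is symmetric. For $\overline{C}_k=\operatorname{diag}(g_k)J^v_k$, note that in the interior each $|v^i(x)|$ depends only on $x^i$ (it is $|x^i-u^i|$, $|x^i-l^i|$, or the constant $1$). Hence $J^v_k$ is diagonal, so $\overline{C}_k$ is diagonal, and $\overline{B}_k$ and $F_k$ are symmetric.

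\emph{Step 1: $\lambda_1(F_k)\le\lambda_1(\overline{B}_k)$.} Let $u$ be a unit eigenvector of $\overline{B}_k$ for $\lambda_1(\overline{B}_k)$, and test $F_k$ with $[u;0]$. The quadratic form gives $[u;0]^TF_k[u;0]=u^T\overline{B}_ku=\lambda_1(\overline{B}_k)$, and the minimum over unit vectors is no larger. (Equivalently, this is Cauchy interlacing for the leading principal submatrix $\overline{B}_k$ of $F_k$.)

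\emph{Step 2: $\lambda_1(F_k)<-\delta$.} The vector $[0;1]$ already gives the value $-\delta$, so we need a small perturbation that goes strictly lower. Take $w=-\overline{g}_k$ and, for $\varepsilon>0$, the vector $[\varepsilon w;1]$. Then
\begin{equation*}
[\varepsilon w;1]^TF_k[\varepsilon w;1]=-\delta-2\varepsilon\|\overline{g}_k\|^2+\varepsilon^2\,\overline{g}_k^T\overline{B}_k\overline{g}_k .
\end{equation*}
Since $\overline{g}_k\neq0$, the linear term dominates for small $\varepsilon$, so the form is strictly less than $-\delta$. The squared norm of the vector is $1+\varepsilon^2\|\overline{g}_k\|^2$, and the sign of the comparison has to survive dividing by it. This is the only delicate point, and I would resolve it as follows. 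Rewrite the quotient as
\begin{equation*}
-\delta+\frac{-2\varepsilon\|\overline{g}_k\|^2+\varepsilon^2\bigl(\overline{g}_k^T\overline{B}_k\overline{g}_k+\delta\|\overline{g}_k\|^2\bigr)}{1+\varepsilon^2\|\overline{g}_k\|^2}.
\end{equation*}
For $\varepsilon$ small the numerator of the second term is negative, because the $O(\varepsilon)$ term $-2\varepsilon\|\overline{g}_k\|^2<0$ dominates the $O(\varepsilon^2)$ term. Hence the Rayleigh quotient is strictly below $-\delta$, and so $\lambda_1(F_k)<-\delta$.

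A shorter alternative for Step 2 is to observe that $F_k[0;1]=[\overline{g}_k;-\delta]$ is not a multiple of $[0;1]$ when $\overline{g}_k\neq0$. So $[0;1]$ is not an eigenvector, and the Rayleigh quotient value $-\delta$ is not the minimum, which again gives $\lambda_1(F_k)<-\delta$. Neither step uses the hypothesis $\overline{B}_k\neq0$, which appears not to be needed.
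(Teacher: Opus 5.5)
Your proof is correct and follows the paper's approach: Step 1 is the Cauchy interlacing argument the paper cites, and Step 2 uses the same test vector $[-\zeta\overline{g}_k;t]$. The paper checks the sign of the quadratic form of $F_k+\delta I$ directly, which avoids the normalisation step you carry out explicitly. You are also right that the hypothesis $\overline{B}_k\neq 0$ is never used, in your argument or in the paper's.
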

\begin{proof}
From the definition of $F_k$ in \eqref{F_k} and the Cauchy interlace theorem in \cite{Parlett3}, we can immediately obtain $\lambda_1(F_k)  \leq \lambda_1(\overline{B}_k) $.
Then we prove $\lambda_1(F_k) < -\delta$, which is equivalent to proving that the matrix $F_k+\delta I$ has a negative eigenvalue.

Let us consider the direction $\left[ -\zeta \overline{g}_k ; t \right]$, where $\zeta,t>0$. Define the following function of $\left( \zeta,t \right)$:
    \begin{equation*}
        \begin{aligned}
            f(\zeta,t)  :=& \begin{bmatrix}
                               -\zeta \overline{g}_k \\
                               t
                           \end{bmatrix}^T 
                           (F_k+\delta I)
                            \begin{bmatrix}
                            -\zeta \overline{g}_k \\
                             t
                             \end{bmatrix},     \\
                         =&\begin{bmatrix}
                               -\zeta \overline{g}_k \\
                               t
                           \end{bmatrix}^T 
                         \begin{bmatrix}
                             \overline{B}_k + \theta_k \cdot I  & \overline{g}_k  \\
                             \overline{g}_k^T        & -\delta+\theta_k
                            \end{bmatrix}  
                         \begin{bmatrix}
                             -\zeta \overline{g}_k \\
                              t
                         \end{bmatrix},     \\                                    
                      =&\zeta^2 \overline{g}_k^T (\overline{B}_k+\delta I) \overline{g}_k -2\zeta t \|\overline{g}_k\|^2.
        \end{aligned}
    \end{equation*}
    For any fixed $t>0$, we have
    \begin{equation*}
        f(0,t)=0 \quad  \text{and} \quad  \frac{\partial f(0,t)}{\partial \zeta}=-2t \|\overline{g}_k\|^2<0.
    \end{equation*}
    Therefore, for sufficiently small $\zeta>0$, it holds that $f(\zeta,t)<0$, which implies that $\left[ -\zeta \overline{g}_k ; t \right]$ is a negative curvature. Hence, we prove $\lambda_1(F_k)+\delta<0 $. 
\end{proof}

In the following lemma, we characterise the optimal solution $[s_k; t_k]$ of problem \eqref{hsodm.subproblem} based on the optimality condition of the standard trust-region subproblem.
\begin{lemma}[Optimality condition]\label{lemma2}
    $[s_k; t_k]$ is the optimal solution of the subproblem \eqref{hsodm.subproblem} if and only if there exists a dual variable $\theta_k > \delta \geq 0$ such that
\begin{subequations}
    \begin{align}
        \label{lemma2.1}
                                    & \begin{bmatrix}
                                          \overline{B}_k + \theta_k \cdot I & \overline{g}_k              \\
                                          \overline{g}_k^T                  & -\delta+\theta_k
                                      \end{bmatrix} \succeq 0, \\
        \label{lemma2.2}
                                    & \begin{bmatrix}
                                          \overline{B}_k + \theta_k \cdot I & \overline{g}_k              \\
                                          \overline{g}_k^T                  & -\delta+\theta_k
                                      \end{bmatrix}
        \begin{bmatrix}
            s_k \\ t_k
        \end{bmatrix} = 0,                                                      \\
        \label{lemma2.3} & \|[s_k; t_k]\| = 1.
    \end{align}
\end{subequations}
    Moreover, $-\theta_k$ is the smallest eigenvalue of the perturbed homogenised matrix $F_k$, i.e., $-\theta_k = \lambda_1(F_k)$.
\end{lemma}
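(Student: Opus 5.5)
The plan is to treat \eqref{hsodm.subproblem} as a trust-region subproblem with zero linear term. The variable is $z=[s;t]\in\mathbf{R}^{n+1}$, the quadratic matrix is $F_k$, and the constraint is $\|z\|\le 1$. I will invoke the classical characterisation of global minimisers of such problems (Moré--Sorensen, or equivalently the S-lemma). It states that $z_k$ is optimal if and only if there is a multiplier $\theta_k\ge 0$ with $F_k+\theta_k I\succeq 0$, $(F_k+\theta_k I)z_k=0$, and $\theta_k(1-\|z_k\|)=0$. Writing $F_k+\theta_k I$ blockwise gives exactly the matrix in \eqref{lemma2.1}--\eqref{lemma2.2}. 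What remains is to upgrade $\theta_k\ge0$ to $\theta_k>\delta$, and complementarity to $\|z_k\|=1$.

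For the sharpening step I will use Lemma \ref{lemma1}, under its standing hypotheses $\overline{g}_k\neq 0$ and $\overline{B}_k\neq 0$. Condition \eqref{lemma2.1} says $\theta_k\ge -\lambda_1(F_k)$, and Lemma \ref{lemma1} gives $-\lambda_1(F_k)>\delta\ge 0$, so $\theta_k>\delta$. In particular $\theta_k>0$, and complementarity then forces $\|[s_k;t_k]\|=1$, which is \eqref{lemma2.3}.

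Next I identify $\theta_k$. By \eqref{lemma2.2} the vector $z_k$ is nonzero, since it has unit norm, and it satisfies $F_kz_k=-\theta_k z_k$. Hence $-\theta_k$ is an eigenvalue of $F_k$, so $-\theta_k\ge\lambda_1(F_k)$. Combined with $-\theta_k\le\lambda_1(F_k)$ from \eqref{lemma2.1}, this gives $-\theta_k=\lambda_1(F_k)$.

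For the converse, suppose $z_k$ and $\theta_k>\delta$ satisfy \eqref{lemma2.1}--\eqref{lemma2.3}. For any feasible $z$, positive semidefiniteness gives
\begin{equation*}
z^TF_kz\ge -\theta_k\|z\|^2\ge -\theta_k.
\end{equation*}
On the other hand \eqref{lemma2.2} and \eqref{lemma2.3} give
\begin{equation*}
z_k^TF_kz_k=-\theta_k\|z_k\|^2=-\theta_k,
\end{equation*}
so $z_k$ is optimal. This direction is elementary and needs no appeal to the trust-region theory.

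The main obstacle is the necessity direction, since the homogeneous problem is nonconvex when $F_k$ is indefinite. I can avoid the general theorem by a direct argument. The minimum of $z^TF_kz$ over $\|z\|\le1$ is $\min\{\lambda_1(F_k),0\}$, which equals $\lambda_1(F_k)<0$ by Lemma \ref{lemma1}. Any minimiser $z_k$ must then have $\|z_k\|=1$: otherwise scaling $z_k$ up to unit norm would strictly decrease the objective, because its value is negative. So $z_k$ attains the Rayleigh quotient minimum and is therefore an eigenvector for $\lambda_1(F_k)$. Setting $\theta_k:=-\lambda_1(F_k)$ then yields \eqref{lemma2.1}--\eqref{lemma2.3} immediately.
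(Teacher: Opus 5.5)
Your proposal is correct and follows the same route as the paper: you invoke the Mor\'e--Sorensen optimality conditions for minimising $z^TF_kz$ over the unit ball, use Lemma~\ref{lemma1} to sharpen $\theta_k\ge -\lambda_1(F_k)>\delta$ (so complementarity forces $\|[s_k;t_k]\|=1$), and read off $-\theta_k=\lambda_1(F_k)$ from \eqref{lemma2.2}, while your direct sufficiency check and Rayleigh-quotient necessity argument simply make the proof self-contained. You are also right to flag the hypothesis $\overline{g}_k\neq 0$ inherited from Lemma~\ref{lemma1}, which the paper uses silently and which is genuinely needed: if $\overline{g}_k=0$ and $\lambda_1(\overline{B}_k)>-\delta$, then $\lambda_1(F_k)=-\delta$ and hence $\theta_k=\delta$.
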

\begin{proof}
    By the optimality condition of the standard trust-region subproblem, $[s_k; t_k]$ is the optimal solution if and only if there exists a dual variable $\theta_k \geq 0$ such that
    \begin{equation*}
        \begin{bmatrix}
            \overline{B}_k + \theta_k \cdot I & \overline{g}_k             \\
            \overline{g}_k^T                  & -\delta+\theta_k
        \end{bmatrix} \succeq 0, \ \begin{bmatrix}
            \overline{B}_k + \theta_k \cdot I & \overline{g}_k              \\
            \overline{g}_k^T                  & -\delta+\theta_k
        \end{bmatrix} \begin{bmatrix}
            s_k \\ t_k
        \end{bmatrix} = 0, \ \text{and} \  \theta_k \cdot (\|[s_k; t_k]\| - 1) = 0.
    \end{equation*}
    With Lemma \ref{lemma1}, we have $\lambda_1(F_k) < -\delta \leq 0$.
    Therefore, $\theta_k \geq  - \lambda_1(F_k) >  \delta \geq 0$, and further $ \|[s_k; t_k]\| = 1 $. Moreover, by \eqref{lemma2.2}, we obtain
    \begin{equation*}
        \begin{bmatrix}
            \overline{B}_k   & \overline{g}_k     \\
            \overline{g}_k^T  & -\delta
        \end{bmatrix} \begin{bmatrix}
            s_k \\ t_k
        \end{bmatrix} = -\theta_k \begin{bmatrix}
            s_k \\ t_k
        \end{bmatrix}.
    \end{equation*}
    Multiplying the equation above by $ \left[ s_k ; t_k \right]^T $, we have
    \begin{equation*}
        \min_{\|[s; t]\| \le 1} \psi_k(s, t; \delta) = -\theta_k.
    \end{equation*}
    Note that with \eqref{lemma2.3}, the optimal value of problem \eqref{hsodm.subproblem} is equivalent to the smallest eigenvalue of $F_k$, i.e., $\lambda_1(F_k)$. Thus, $-\theta_k = \lambda_1(F_k)$.
\end{proof}

Based on the above optimality condition, we can derive the following lemma.
\begin{lemma}
    \label{lemma3}
    The equation \eqref{lemma2.2} in Lemma \ref{lemma2} can be rewritten as,
    \begin{equation}\label{lemma3.1}
        \left(\overline{B}_k + \theta_k I\right)  s_k =  - t_k \overline{g}_k \quad \text{and} \quad \overline{g}_k^T s_k  =  t_k (\delta - \theta_k).
    \end{equation}
    Furthermore,
    \begin{enumerate}
        \item[(1)] If $t_k = 0$, then we have
            \begin{equation}\label{lemma3.2}
                \left(\overline{B}_k + \theta_k I\right) s_k = 0 \quad \text{and} \quad \overline{g}_k^T s_k  = 0,
            \end{equation}
            where the first equality implies that $(-\theta_k,s_k)$ is the eigenpair of the matrix $\overline{B}_k$.
        \item[(2)] If $t_k \neq 0$, then we have
            \begin{equation}
                \label{lemma3.3}
                \overline{g}_k^T \overline{d}_k = \delta -\theta_k \quad \text{and} \quad \left(\overline{B}_k+\theta_k \cdot I\right)\overline{d}_k =-\overline{g}_k
            \end{equation}
            where $\overline{d}_k =s_k / t_k$.
    \end{enumerate}
\end{lemma}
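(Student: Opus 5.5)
The plan is to expand the block matrix equation \eqref{lemma2.2} of Lemma \ref{lemma2} row by row, and then specialise to the two cases on $t_k$. Writing out the product of the $(n+1)\times(n+1)$ matrix with $[s_k;t_k]$, the first block row gives $(\overline{B}_k+\theta_k I)s_k + t_k\overline{g}_k = 0$. The last scalar row gives $\overline{g}_k^Ts_k + (-\delta+\theta_k)t_k = 0$. Rearranging these yields exactly the two identities in \eqref{lemma3.1}. Conversely, the two identities in \eqref{lemma3.1} reassemble into \eqref{lemma2.2}, so the rewriting is an equivalence.

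For case (1), I substitute $t_k=0$ into \eqref{lemma3.1}. Both right-hand sides vanish, which gives \eqref{lemma3.2}. To read the first equality as an eigenpair statement, I need $s_k\neq 0$. This comes from \eqref{lemma2.3}: since $\|[s_k;t_k]\|=1$ and $t_k=0$, we have $\|s_k\|=1$. Then $\overline{B}_ks_k=-\theta_k s_k$ with $s_k\neq 0$, so $(-\theta_k,s_k)$ is an eigenpair of $\overline{B}_k$.

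For case (2), $t_k\neq 0$, so I divide both identities in \eqref{lemma3.1} by $t_k$ and set $\overline{d}_k=s_k/t_k$. The first identity becomes $(\overline{B}_k+\theta_kI)\overline{d}_k=-\overline{g}_k$. The second becomes $\overline{g}_k^T\overline{d}_k=\delta-\theta_k$. Together these are \eqref{lemma3.3}.

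There is no genuine obstacle here, since everything is block multiplication. The only point that needs care is the nonzero eigenvector in case (1), and the normalisation \eqref{lemma2.3} from Lemma \ref{lemma2} settles it.
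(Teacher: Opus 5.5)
Your proposal is correct and is the same argument the paper intends: the paper omits the proof as ``a direct application of Lemma~\ref{lemma2},'' which amounts to the row-by-row block expansion you carry out. Your observation that \eqref{lemma2.3} gives $\|s_k\|=1$ when $t_k=0$ fills a detail the paper leaves implicit, namely why $(-\theta_k,s_k)$ is a genuine eigenpair with a nonzero eigenvector.
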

As the lemma above is a direct application of Lemma \ref{lemma2}, we do not present its proof here.

\begin{lemma}[Nontriviality of direction $s_k$]\label{lemma4}
    If $\overline{g}_k \neq 0$, then $s_k \neq 0$.
\end{lemma}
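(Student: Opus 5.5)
Suppose, for contradiction, that $\overline{g}_k \neq 0$ but $s_k = 0$. I will show this is incompatible with the optimality system of Lemma \ref{lemma2}, in particular with the fact that $\theta_k > \delta$.

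First, since $\|[s_k; t_k]\| = 1$ by \eqref{lemma2.3}, $s_k = 0$ forces $|t_k| = 1$, and in particular $t_k \neq 0$. Next, substitute $s_k = 0$ into the first relation of \eqref{lemma3.1} in Lemma \ref{lemma3}, namely $(\overline{B}_k + \theta_k I) s_k = -t_k \overline{g}_k$. The left-hand side vanishes, so $t_k \overline{g}_k = 0$. Because $t_k \neq 0$, this gives $\overline{g}_k = 0$, contradicting the hypothesis. Hence $s_k \neq 0$.

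As a consistency check, the second relation of \eqref{lemma3.1} gives the same conclusion through a different route. With $s_k = 0$ it reads $0 = t_k(\delta - \theta_k)$. Since $t_k \neq 0$ and $\theta_k > \delta$ by Lemma \ref{lemma2}, this is also impossible. Either relation suffices, so I will present the first one, which uses the hypothesis $\overline{g}_k \neq 0$ directly.

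There is no real obstacle here. The only care needed is to note that Lemma \ref{lemma2} guarantees the normalization $\|[s_k;t_k]\| = 1$, rather than merely $\|[s_k;t_k]\| \le 1$. That normalization is what excludes the trivial case $[s_k;t_k] = 0$ and thereby forces $t_k \neq 0$.
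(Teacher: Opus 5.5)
Your proof is correct and is essentially the paper's argument. Both combine the first relation of \eqref{lemma3.1} with the normalization \eqref{lemma2.3}: the paper deduces $t_k=0$ from $t_k\overline{g}_k=0$ and contradicts $\|[s_k;t_k]\|=1$, while you use the normalization first to get $t_k\neq 0$ and then contradict $\overline{g}_k\neq 0$. (Your alternative route through $\theta_k>\delta$ is also valid, but that inequality itself rests on Lemma \ref{lemma1}, and hence on the hypothesis $\overline{g}_k\neq 0$.)
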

\begin{proof}
    We prove this by contradiction. Suppose that $s_k = 0$. Then, we have $t_k \overline{g}_k = 0$ with equation \eqref{lemma3.1} in Lemma \ref{lemma3}. It further implies that $t_k = 0$ due to $g_k \neq 0$. However, $[ s_k ; t_k]=0$ contradicts to the equation $\|[s_k; t_k]\| = 1$ in the optimality condition. Therefore, we have $s_k \neq 0$.
\end{proof}

This lemma establishes the existence of a nontrivial direction $v_k$ at all times, ensuring that Algorithm \ref{SOBASIP} will never get stuck.

\begin{lemma}\label{lemma5}
    For the sign function value $\text{sign}(-\overline{g}_k^T s_k)$, we always have $\text{sign}(-\overline{g}_k^T s_k) \cdot t_k = |t_k|$.
\end{lemma}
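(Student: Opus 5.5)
The plan is to read off the sign of $-\overline{g}_k^T s_k$ from the second relation in \eqref{lemma3.1} of Lemma \ref{lemma3}, namely $\overline{g}_k^T s_k = t_k(\delta - \theta_k)$. This gives
\begin{equation*}
-\overline{g}_k^T s_k = t_k(\theta_k - \delta).
\end{equation*}
Lemma \ref{lemma2} guarantees $\theta_k > \delta \ge 0$, so the factor $\theta_k - \delta$ is strictly positive. Hence $-\overline{g}_k^T s_k$ has exactly the same sign as $t_k$.

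I would then split into cases. If $t_k > 0$, then $-\overline{g}_k^T s_k > 0$, so $\text{sign}(-\overline{g}_k^T s_k) = 1$ and the product with $t_k$ equals $t_k = |t_k|$. If $t_k < 0$, then $-\overline{g}_k^T s_k < 0$, so the sign is $-1$ and the product is $-t_k = |t_k|$. If $t_k = 0$, then by \eqref{lemma3.2} we have $\overline{g}_k^T s_k = 0$. Whatever value the convention assigns to $\text{sign}(0)$ (whether $0$ or $\pm 1$), the product with $t_k = 0$ is $0 = |t_k|$.

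No step is a real obstacle. The only care needed is to note that the strict inequality $\theta_k > \delta$ from Lemma \ref{lemma2} is what makes the sign transfer exact. That inequality depends on Lemma \ref{lemma1}, so it tacitly assumes $\overline{g}_k \neq 0$. This is the standing situation in the algorithm, since otherwise the iterate is already first-order stationary. The degenerate case $t_k = 0$ has to be handled separately so that the sign convention at zero does not matter.
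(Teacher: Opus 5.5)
Your proof is correct and follows the paper's own argument: both read $-\overline{g}_k^T s_k = t_k(\theta_k-\delta)$ off \eqref{lemma3.1} and use $\theta_k>\delta$ from Lemma \ref{lemma2} to get $\mathrm{sign}(-\overline{g}_k^T s_k)=\mathrm{sign}(t_k)$. Your separate handling of $t_k=0$ and your remark that $\theta_k>\delta$ tacitly requires $\overline{g}_k\neq 0$ (via Lemma \ref{lemma1}) are accurate refinements the paper leaves implicit; indeed, if $\overline{g}_k=0$ and $t_k\neq 0$, the identity fails under the usual convention $\mathrm{sign}(0)=0$.
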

\begin{proof}
    By the second equation of optimal condition \eqref{lemma3.1} and the fact that $\delta < \theta_k$, we obtain that
    $$
        \text{sign}(-\overline{g}_k^T s_k) = \text{sign}(t_k),
    $$
    and it implies
    $$
        \text{sign}(-\overline{g}_k^T s_k) \cdot t_k = \text{sign}(t_k) \cdot t_k = |t_k|.
    $$
    This completes the proof.
\end{proof}

\begin{lemma}[Trivial case, $\overline{g}_k=0$]\label{lemma6.add}
Suppose that $\overline{g}_k$=0, then the following statements hold.
\begin{enumerate}
\item[(1)] If $\lambda_1(\overline{B}_k) > -\delta$, then $t_k=1$.
\item[(2)] If $\lambda_1(\overline{B}_k) < -\delta$, then $t_k=0$.
\end{enumerate}
\end{lemma}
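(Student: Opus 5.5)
The plan is to use the fact that when $\overline{g}_k=0$ the aggregated matrix $F_k$ in \eqref{F_k} becomes block diagonal, $F_k=\mathrm{diag}(\overline{B}_k,-\delta)$. Its spectrum is then the spectrum of $\overline{B}_k$ together with the single extra eigenvalue $-\delta$, and that extra eigenvalue has eigenvector $[0;1]$. I would identify $\lambda_1(F_k)=\min\{\lambda_1(\overline{B}_k),-\delta\}$ and then read off $t_k$ from the eigenvector equations \eqref{lemma3.1}.

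One caveat: Lemma \ref{lemma2} was derived using Lemma \ref{lemma1}, which assumes $\overline{g}_k\neq 0$. So I would not cite Lemma \ref{lemma2} verbatim. Instead I would work directly with the characterisation that $[s_k;t_k]$ is a unit eigenvector of $F_k$ for $\lambda_1(F_k)=-\theta_k$, as stated after \eqref{hsodm.subproblem}. The eigen-equations \eqref{lemma3.1} then read
\begin{equation*}
(\overline{B}_k+\theta_k I)s_k=0,\qquad t_k(\delta-\theta_k)=0 .
\end{equation*}

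For statement (2), assume $\lambda_1(\overline{B}_k)<-\delta$. Then $\lambda_1(F_k)=\lambda_1(\overline{B}_k)$, so $\theta_k=-\lambda_1(\overline{B}_k)>\delta$. The second equation then forces $t_k=0$. Consequently $s_k$ is a unit eigenvector of $\overline{B}_k$ for $\lambda_1(\overline{B}_k)$.

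For statement (1), assume $\lambda_1(\overline{B}_k)>-\delta$. Then $\lambda_1(F_k)=-\delta$, so $\theta_k=\delta$. Moreover $\overline{B}_k+\delta I\succ 0$ is nonsingular, so the first equation gives $s_k=0$. The unit-norm condition then yields $|t_k|=1$, i.e.\ the eigenvalue $-\delta$ is simple with eigenspace spanned by $[0;1]$.

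The main obstacle is the sign and normalisation in case (1). The eigenvector is only determined up to sign, so the argument itself gives $|t_k|=1$ rather than $t_k=1$. I would handle this by fixing the sign convention for the eigenvector (choosing $t_k\ge 0$), noting that nothing in the algorithm depends on this sign since $s_k/t_k=0$ either way.

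A related degenerate point is $\delta=0$. Then the trust-region optimality conditions alone would not force $\|[s_k;t_k]\|=1$, because $F_k\succeq 0$ and the minimum value is $0$. This is resolved by appealing to the convention that the subproblem is solved as an eigenvector problem, returning a unit eigenvector for $\lambda_1(F_k)$. I would state this explicitly so that the unit-norm condition used above is justified in both cases.
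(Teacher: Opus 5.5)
Your proof is correct, but it takes a different route from the paper's. The paper argues variationally. With $\overline{g}_k=0$ one has $\psi_k(s,t;\delta)=s^T\overline{B}_ks-\delta t^2$. If $s_k\neq 0$, then using $\|[s_k;t_k]\|=1$ the bound $s_k^T\overline{B}_ks_k\ge\lambda_1(\overline{B}_k)\|s_k\|^2>-\delta\|s_k\|^2$ gives $\psi_k(s_k,t_k;\delta)>-\delta=\psi_k(0,1;\delta)$, contradicting optimality; part (2) is left to ``the same argument''. You instead use the block-diagonal form $F_k=\mathrm{diag}(\overline{B}_k,-\delta)$ to compute $\lambda_1(F_k)=\min\{\lambda_1(\overline{B}_k),-\delta\}$, and then read $t_k$ off the eigen-equations $(\overline{B}_k+\theta_kI)s_k=0$ and $t_k(\delta-\theta_k)=0$. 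The paper's comparison needs no spectral computation, only optimality plus the unit norm. Yours treats both parts uniformly and yields extra information: $\theta_k=\delta$ in (1), and in (2) $s_k$ is a unit eigenvector of $\overline{B}_k$ for $\lambda_1(\overline{B}_k)$. Your caveats are well founded, and the paper's proof glosses over them. First, its step ``$t_k\neq 1\Rightarrow s_k\neq 0$'' overlooks $t_k=-1$, so without a sign convention only $|t_k|=1$ follows. Second, it cites \eqref{lemma2.3} from Lemma~\ref{lemma2}, whose proof rests on Lemma~\ref{lemma1} and hence on $\overline{g}_k\neq 0$. Your computation $\theta_k=\delta$ in case (1) shows that Lemma~\ref{lemma2}'s strict inequality $\theta_k>\delta$ genuinely fails there. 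Third, when $\delta=0$ in case (1), every $[0;t]$ with $|t|\le 1$, including $[0;0]$, minimises the subproblem. The unit norm must therefore come from the eigenvector convention, exactly as you propose. These same repairs are needed to make the paper's variational route rigorous.
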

\begin{proof}
When $\overline{g}_k=0$, we can immediately get
$$ \min_{\|[s; t]\| \le 1}
        \psi_k(s, t; \delta)=s^T\overline{B}_ks-t^2 \cdot \delta.$$
We first prove the statement (1) by contradiction, and the other one can be proved by the same argument. Suppose that $t_k\neq 1$, then we have $s_k\neq0$ with the help of equation \eqref{lemma2.3}. Thus,
$$\psi_k(s_k, t_k; \delta)=s^T\overline{B}_ks-t^2 \cdot \delta > -\delta=\psi_k(0, 1; \delta),$$
where the inequality holds due to $s^T\overline{B}_ks \geq \lambda_1(\overline{B}_k)\|s_k\|^2 > -\delta\|s_k\|^2$. The above inequality contradicts to the optimality of $(s_k,t_k)$, and thus $t_k=1$.
\end{proof}
\section{Global convergence}\label{section4}
In this section, we analyse the convergence rate of the Algorithm \ref{SOBASIP}. We consider the large and small values of $\|\overline{d}_k\|$, respectively.
For the large value case of $\|\overline{d}_k\|$, we show that the function value decreases by at least $\Omega(\epsilon^{3/2})$ at every iteration after carefully selecting the perturbation parameter $\delta$. In the latter case, we prove that the next iterate $x_{k+1}$ is already an $\epsilon$-approximate SOSP, and thus the algorithm can terminate. Throughout the paper, we make the following assumptions.\\
%\noindent \(\mathbf{(A1)}\) \quad  The iterates $\{x_k\}$ which is the sequence generated by Algorithm \ref{SOBASIP}, remain in a closed, bounded domain  $X\subset\mathbb{R}^n$.\\
\noindent \(\mathbf{(A1)}\) \quad  There exists a bounded closed set $X \subset \mathrm{int} (\mathcal{F})$ such that all iterates $\{x_k\}$ generated by the algorithm satisfy $\{x_k\} \subset X$.\\
\noindent \(\mathbf{(A2)}\) \quad  $f(x)$ is thrice continuously differentiable on $X$. $H(x)$ is Lipschitz continuous on $X$, i.e., there exists a constant $L_H>0$ such that
\begin{equation}\label{H.lipschitz}
        \|H(x) - H(y)\| \le L_H \|x-y\|, ~\forall x, y \in X,
\end{equation}
and that the Hessian matrix is bounded,
\begin{equation}\label{bound.Hk}
\|H_k\| \leq U_H, ~\forall k \geq 0,
\end{equation}
for some $U_H > 0$.
\\
\noindent \(\mathbf{(A3)}\) \quad $\{D_k^{-1}\}$ and $\{\overline{B}_k\}$ are bounded, i.e., there exist constants $\gamma_D>0$, $\gamma_B>0$ such that 
\begin{equation}\label{boundB.D}
\|D_k^{-1}\| \leq \gamma_D \quad \text{and} \quad \|\overline{B}_k\| \leq \gamma_B
\end{equation}
for all $k>0$.
\\

(A1) also implies that $\{D_k^{-1}\}$ is uniformly continuous on the sequence of iterates $\{x_k\}$, i.e., 
\begin{equation}\label{uniformly continuous}
\|D_{r_i}^{-1}-D_{l_i}^{-1}\| \to 0
\end{equation}
whenever $\|x_{r_i}-x_{l_i}\| \to 0, i \to \infty$, where $\{x_{r_i}\}$ and $\{x_{l_i}\}$ are the subsequences of $\{x_k\}$.\\

\begin{definition}\label{definition2}
A point $x$ is called an $\epsilon$-approximate second-order stationary point (SOSP) if it satisfies the following conditions:
\begin{subequations}
 \begin{align} 
     & \|\overline{g}(x)\|\leq O(\epsilon), \label{def1.1}                        \\
     & \lambda_1 (\overline{B}(x)) \geq \Omega(-\sqrt{\epsilon}). \label{def1.2}
 \end{align}
\end{subequations}
\end{definition}
Lemma 1.2.4 in \cite{Nesterov4} provides the following result, which plays an important role in proving the function decrease value and the convergence of the Algorithm \ref{SOBASIP}.
\begin{lemma}\label{lemma6}
Suppose that assumptions (A1)-(A2) hold. Then for all $x, y \in \mathbb{R}^n$, we have
\begin{subequations}
        \begin{align}
             & \left\|g(y)-g(x)-H(x)(y-x)\right\|                       \leq \frac{L_H}{2}\|y-x\|^{2}, \label{lemma6.1a} \\
             & \left|f(y)-f(x)-g(x)^T(y-x)-\frac{1}{2}(y-x)^TH(x)(y-x)\right|  \leq \frac{L_H}{6}\|y-x\|^{3}. \label{lemma6.1b}
        \end{align}
    \end{subequations}
\end{lemma}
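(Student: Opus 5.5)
The plan is the standard integral form of Taylor's theorem, applied along the segment from $x$ to $y$, combined with the Lipschitz bound \eqref{H.lipschitz} from (A2). One caveat comes first. \eqref{H.lipschitz} is only assumed on $X$, so the argument needs the whole segment $\{x+\tau(y-x):\tau\in[0,1]\}$ to lie in the region where the Lipschitz bound holds. I will therefore prove the statement for $x,y$ such that this segment lies in that region; this is how the result is used later, with $y=x_k+\alpha d_k$ kept inside $\mathrm{int}(\mathcal{F})$ by the step-size cap $\alpha_k^{\max}$. Alternatively, one can read (A2) as giving a Lipschitz Hessian on a convex neighbourhood containing these segments.

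For \eqref{lemma6.1a}, write $h=y-x$. Since $f$ is $C^2$ along the segment, the fundamental theorem of calculus gives
\begin{equation*}
g(y)-g(x)=\int_0^1 H(x+\tau h)h\,d\tau .
\end{equation*}
Subtract $H(x)h=\int_0^1 H(x)h\,d\tau$ and take norms to get
\begin{equation*}
\|g(y)-g(x)-H(x)h\|\le\int_0^1\|H(x+\tau h)-H(x)\|\,\|h\|\,d\tau .
\end{equation*}
Applying \eqref{H.lipschitz} bounds the integrand by $L_H\tau\|h\|^2$. Since $\int_0^1\tau\,d\tau=1/2$, this yields $\frac{L_H}{2}\|h\|^2$.

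For \eqref{lemma6.1b}, write
\begin{equation*}
f(y)-f(x)-g(x)^Th=\int_0^1 \bigl(g(x+\tau h)-g(x)\bigr)^Th\,d\tau .
\end{equation*}
Also use $\frac12 h^TH(x)h=\int_0^1\tau\,h^TH(x)h\,d\tau$. The left-hand side of \eqref{lemma6.1b} then becomes
\begin{equation*}
\left|\int_0^1\bigl(g(x+\tau h)-g(x)-\tau H(x)h\bigr)^Th\,d\tau\right| .
\end{equation*}
By Cauchy--Schwarz, the integrand is at most $\|g(x+\tau h)-g(x)-H(x)(\tau h)\|\,\|h\|$. Now apply \eqref{lemma6.1a} with $y$ replaced by $x+\tau h$, which bounds this by $\frac{L_H}{2}\tau^2\|h\|^3$. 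Integrating, with $\int_0^1\tau^2\,d\tau=1/3$, gives $\frac{L_H}{6}\|h\|^3$.

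The calculus is routine. The only genuine obstacle is the domain issue: $X$ need not be convex, while the statement is phrased for all $x,y$. I would handle this as described above, by requiring the relevant segments to lie in $\mathrm{int}(\mathcal{F})$ and the Lipschitz bound to hold there. I would note explicitly that this is exactly the setting of Nesterov's Lemma 1.2.4, which the paper cites.
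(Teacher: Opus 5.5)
Your proof is correct and is essentially the paper's approach. The paper gives no argument of its own; it cites Lemma 1.2.4 of Nesterov, whose proof is exactly this integral-form Taylor argument along the segment from $x$ to $y$ combined with the Lipschitz bound on $H$. Your domain caveat is also well taken: (A2) gives the Lipschitz and smoothness hypotheses only on $X$, which need not be convex, so the literal claim ``for all $x,y\in\mathbb{R}^n$'' holds only when the segment stays in the region where those hypotheses apply.
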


\subsection{Analysis for the scenario of large value}\label{section4.1}
As discussed in \cite{zhang14}, we also define the large value case of   $\|\overline{d}_k\|$ as the case that its norm is larger than $\Delta$ which is a constant, i.e., $\|\overline{d}_k\| > \Delta$. It is clear that whether the case $\nu \le |t_k| \le \sqrt{1/(1+\Delta^2)}$ or the case $|t_k| \le \nu$ with $\nu \in (0, 1/2)$, $\|\overline{d}_k\| > \Delta$ will always be obtained. Therefore, we call these two cases the large value case. In this scenario, the homogenised direction can be either $\overline{d}_k = \text{sign}(-\overline{g}_k^T s_k) \cdot s_k$ or $\overline{d}_k = s_k / t_k$. The following discussion shows that the function value decreases by at least $\Omega(\epsilon^{3/2})$ in the scenario of large value.

%Before discussing the decrease of the function value, we first state that
%\begin{equation}\label{banzhengding}
%\overline{d}_k^T \overline{C}_k \overline{d}_k \geq 0.
%\end{equation}
From the definition of the $\overline{C}_k$ in \eqref{fangshedk}, we have that
\begin{equation}\label{Ck}
\overline{C}_k = \operatorname{diag}(g_k)J_k^v=\operatorname{diag}\bigg(\frac{\partial \left|v^i(x_k)\right|}{\partial x_k^i} \cdot g_k^i\bigg),
\end{equation}
where 
\begin{equation*}
\frac{\partial \left|v^i(x_k)\right|}{\partial x_k^i}:=
\left\{
\begin{aligned}
&1,  & \text{ if } g_k^i\geq 0 \text{ and }l^i>-\infty,\\
&0,  & \text{ if } g_k^i<0 \text{ and } u^i=\infty, \\
&0,  & \text{ if } g_k^i\geq 0 \text{ and } l^i=-\infty,\\
&-1, & \text{ if } g_k^i<0 \text{ and } u^i<\infty.\\
\end{aligned}
\right.
\end{equation*}
It is obvious that $\overline{C}_k$ is a positive semidefinite diagonal matrix.
%implying that $\overline{d}_k^T \overline{C}_k \overline{d}_k \geq 0$ holds.

We derive the descent lemma using the line-search strategy and further establish an upper bound on the number of iterations needed for the backtracking line search procedure. For the cases $|t_k| < \nu$ and $|t_k| \geq \nu$, we obtain the following two lemmas that characterise the sufficient decrease property.
\begin{lemma}\label{lemma7}
Suppose that assumptions (A1)-(A3) hold and set $\nu \in (0, 1/2)$. If $|t_k| < \nu$, then let $\overline{d}_k = \text{sign}(-\overline{g}_k^T s_k) \cdot s_k$. The backtracking line search terminates with $\alpha_{k,j}=\beta^{j_k}\alpha_{k,0}$,  and $j_k$ is upper bounded by
    $$
        j_N := \left \lceil \log_\beta\left(\frac{3\delta}{L_H\gamma_D^3+\gamma}\right) \right\rceil,
    $$and the function value associated with the stepsize $\alpha_k$ satisfies,
    \begin{equation}\label{decrease1}
        f(x_{k+1}) - f(x_k) \leq -\min \left\{\frac{\sqrt{3}\gamma}{16}\underline{\alpha}^3, \frac{9\gamma\beta^3\delta^3}{2\left(L_H\gamma_D^3+\gamma\right)^3}\right\}.
    \end{equation}
\end{lemma}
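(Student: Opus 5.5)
The plan is to bound the change in $f$ along $d_k=D_k^{-1}\overline{d}_k$ by a cubic Taylor estimate, and to rewrite every term in the scaled variables using the optimality relations of Lemma \ref{lemma3}. The quadratic term will turn out to be strictly negative, of order $\delta\|\overline{d}_k\|^2$. Any sufficiently small step will then satisfy \eqref{decrease condition}, and this gives both the bound on $j_k$ and the bound \eqref{decrease1}.

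\emph{Step 1 (Taylor bound).} Feasibility is not an issue: every trial step satisfies $\alpha_{k,j}\le \tau\alpha_k^{\max}<\alpha_k^{\max}$, so every trial point stays in $\mathrm{int}(\mathcal{F})$. By \eqref{lemma6.1b},
$$f(x_k+\alpha d_k)-f(x_k)\le \alpha g_k^Td_k+\tfrac{\alpha^2}{2}d_k^TH_kd_k+\tfrac{L_H}{6}\alpha^3\|d_k\|^3.$$
We translate each term into scaled quantities:
\begin{itemize}
\item $g_k^Td_k=\overline{g}_k^T\overline{d}_k$;
\item $d_k^TH_kd_k=\overline{d}_k^T\overline{H}_k\overline{d}_k=\overline{d}_k^T\overline{B}_k\overline{d}_k-\overline{d}_k^T\overline{C}_k\overline{d}_k\le \overline{d}_k^T\overline{B}_k\overline{d}_k$, because $\overline{C}_k\succeq 0$ by \eqref{Ck};
\item $\|d_k\|\le\gamma_D\|\overline{d}_k\|$, by (A3).
\end{itemize}

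\emph{Step 2 (curvature along $s_k$).} Write $\overline{d}_k=\sigma s_k$ with $\sigma=\text{sign}(-\overline{g}_k^Ts_k)$.
\begin{itemize}
\item By Lemma \ref{lemma5}, $\overline{g}_k^T\overline{d}_k=-|\overline{g}_k^Ts_k|\le 0$, so the linear term can be dropped.
\item Multiply the first relation in \eqref{lemma3.1} by $s_k^T$ and use the second relation $\overline{g}_k^Ts_k=t_k(\delta-\theta_k)$. This gives
$$\overline{d}_k^T\overline{B}_k\overline{d}_k=-\theta_k\|s_k\|^2+t_k^2(\theta_k-\delta)=-\theta_k(1-2t_k^2)-\delta t_k^2,$$
where we used $\|s_k\|^2=1-t_k^2$ from \eqref{lemma2.3}.
\item Since $|t_k|<\nu<1/2$ we have $1-2t_k^2>0$, and $\theta_k>\delta$ by Lemma \ref{lemma2}. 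Hence
$$\overline{d}_k^T\overline{B}_k\overline{d}_k\le-\delta(1-2t_k^2)-\delta t_k^2=-\delta\|\overline{d}_k\|^2.$$
\item Also $3/4<\|\overline{d}_k\|^2\le 1$.
\end{itemize}

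\emph{Step 3 (acceptance, $j_N$, decrease).} Combining Steps 1 and 2,
$$f(x_k+\alpha d_k)-f(x_k)+\tfrac{\gamma}{6}\alpha^3\|\overline{d}_k\|^3\le \alpha^2\|\overline{d}_k\|^2\Big(-\tfrac{\delta}{2}+\tfrac{(L_H\gamma_D^3+\gamma)}{6}\alpha\|\overline{d}_k\|\Big).$$
The right-hand side is nonpositive as soon as $\alpha\le 3\delta/(L_H\gamma_D^3+\gamma)$, since $\|\overline{d}_k\|\le 1$. So \eqref{decrease condition} holds for every such $\alpha$.

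Because backtracking starts from $\alpha_{k,0}\le 1$ and multiplies by $\beta$, it stops no later than the first $j$ with $\beta^j\le 3\delta/(L_H\gamma_D^3+\gamma)$. This gives $j_k\le j_N$. It also gives the lower bound on the accepted step,
$$\alpha_k\ge\min\{\alpha_{k,0},\ 3\beta\delta/(L_H\gamma_D^3+\gamma)\}.$$

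For the first branch of the minimum, I would use $\underline{\alpha}$, a uniform lower bound on $\alpha_{k,0}=\tau\min\{1,\alpha_k^{\max}\}$. It exists because (A1) keeps iterates in a compact $X\subset\mathrm{int}(\mathcal{F})$ at positive distance from the boundary, while $\|d_k\|\le\gamma_D$, so $\alpha_k^{\max}$ is bounded below. Plugging either branch into $-\tfrac{\gamma}{6}\alpha_k^3\|\overline{d}_k\|^3$:
\begin{itemize}
\item with $\|\overline{d}_k\|^3\ge(3/4)^{3/2}$, the first branch gives $\tfrac{\sqrt3\gamma}{16}\underline{\alpha}^3$;
\item the $\delta$-branch gives the second term of \eqref{decrease1}, up to the same normalisation of $\|\overline{d}_k\|^3$.
\end{itemize}

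\emph{Main obstacle.} The delicate step is Step 2: one has to see that the sign choice makes the linear term harmless, and that $\nu<1/2$ is exactly what keeps the curvature along $s_k$ negative, with size at least $\delta\|\overline{d}_k\|^2$. A secondary point needing care is justifying the uniform lower bound $\underline{\alpha}$ on $\alpha_k^{\max}$ from (A1) and (A3).
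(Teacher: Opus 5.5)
Your Step 2 reaches the same intermediate inequality as the paper, $f(x_k+\alpha d_k)-f(x_k)\le-\frac{\alpha^2}{2}\delta\|s_k\|^2+\frac{L_H\gamma_D^3}{6}\alpha^3\|s_k\|^3$, but by a different route. You drop the linear term and show that $s_k$ itself has curvature at most $-\delta$, via $s_k^T\overline{B}_ks_k=-\theta_k(1-2t_k^2)-\delta t_k^2$; this needs $t_k^2<1/2$. The paper instead keeps the linear term $\alpha|t_k|(\delta-\theta_k)$. It uses $|t_k|\ge t_k^2$ and $\alpha\ge\alpha^2/2$ to absorb the positive part $\frac{\alpha^2}{2}t_k^2(\theta_k-\delta)$ of the curvature, which leaves $-\frac{\alpha^2}{2}\theta_k\|s_k\|^2$ for every $|t_k|\le 1$. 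So in the paper $\nu<1/2$ is not what drives this step; it is used only for $\|s_k\|\ge\sqrt{3}/2$. Your version is valid and gives a clean negative-curvature statement that does not depend on the step size. The paper's version is slightly more general and keeps $\theta_k$ rather than $\delta$ until the last line.

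There is, however, a genuine shortfall in Step 3. You replace $\|\overline{d}_k\|$ by $1$ in the acceptance threshold before bounding the accepted step, so after a rejection you only have $\alpha_k\ge 3\beta\delta/(L_H\gamma_D^3+\gamma)$. That gives $\frac{\gamma}{6}\alpha_k^3\|\overline{d}_k\|^3\ge\frac{3\sqrt{3}}{8}\cdot\frac{9\gamma\beta^3\delta^3}{2(L_H\gamma_D^3+\gamma)^3}$, which falls short of the second term in \eqref{decrease1} by the factor $(3/4)^{3/2}$. Your phrase ``up to the same normalisation'' hides this. In the first branch the factor is built into the constant $\frac{\sqrt{3}}{16}$, but the second-branch constant $\frac{9}{2}=\frac{27}{6}$ contains no such factor.

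The fix, which is what the paper does, is to keep the product $\alpha\|\overline{d}_k\|$ together.
\begin{itemize}
\item Your right-hand side is nonpositive whenever $\alpha\|\overline{d}_k\|\le 3\delta/(L_H\gamma_D^3+\gamma)$.
\item So any rejected trial step has $\alpha_{k,j}\|\overline{d}_k\|>3\delta/(L_H\gamma_D^3+\gamma)$, and hence $\alpha_k\|\overline{d}_k\|>3\beta\delta/(L_H\gamma_D^3+\gamma)$.
\item Then $\frac{\gamma}{6}(\alpha_k\|\overline{d}_k\|)^3$ gives exactly the stated $\frac{9\gamma\beta^3\delta^3}{2(L_H\gamma_D^3+\gamma)^3}$.
\end{itemize}
Use $\|\overline{d}_k\|\le 1$ only for the count $j_k\le j_N$.
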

\begin{proof}
%If the backtracking line search terminates with $\alpha_k=\alpha_{k,0}=\tau$, then we can obtain
%\begin{equation*}
%        \begin{aligned}
%            f(x_{k+1}) - f(x_k)
%            \leq -\frac{\gamma}{6}{\alpha}_{k}^3\|\overline{d}_k\|^3 
%            =-\frac{\gamma}{6}\tau^3\|\overline{d}_k\|^3
%            \leq -\frac{\sqrt{3}\gamma\tau^3}{16}
%        \end{aligned}
%\end{equation*}
%where the last inequality is due to $\|s_k\| = \sqrt{1-|t_k|^2} \geq \sqrt{1-\nu^2} \geq \sqrt{3}/2$.
%Moreover, if the backtracking line search terminates with $\alpha_k=\alpha_{k,0}=\tau \cdot \alpha_k^{\text{max}}$, then
%\begin{equation*}
%        \begin{aligned}
%            f(x_{k+1}) - f(x_k)
%            \leq -\frac{\gamma}{6}{\alpha}_{k}^3\|\overline{d}_k\|^3 
%            \leq -\frac{\sqrt{3}\gamma}{16}\left(\tau \alpha_k^{\text{max}}\right)^3 
%            \leq -\frac{\sqrt{3}\gamma}{16}(\tau\bar{\alpha})^3,
%        \end{aligned}
%\end{equation*}
%where $\bar{\alpha}>0$ is a constant, and the last inequality follows from assumption (A1) and the boundedness of $\|d_k\|$. 
%Indeed, we have
%\begin{equation*}
%\|d_k\| \leq \gamma_D \|s_k\| \leq \gamma_D.
%\end{equation*}
It follows from assumption (A1) that there exists a positive constant $\underline{\xi}$ such that all iterates $\{x_k\}$ satisfy
\begin{equation*}
\min_{i}\left\{ x_k^i - l^i,\ u^i - x_k^i \right\}\geq \underline{\xi}>0.
\end{equation*}
Furthermore, combining the boundedness of $\|d_k\|$ and the definition of $\alpha_k^{\text{max}}$, we conclude that the initial line search stepsize $\alpha_{k,0}$ has a uniform positive lower bound, i.e., $\alpha_{k,0} \geq \underline{\alpha}>0$ for all $k$.

If the backtracking line search terminates with $\alpha_k=\alpha_{k,0}$, then we can obtain
\begin{equation*}
        \begin{aligned}
            f(x_{k+1}) - f(x_k)
            \leq -\frac{\gamma}{6}{\alpha}_{k}^3\|\overline{d}_k\|^3 
            = -\frac{\gamma}{6}\alpha_{k,0}^3 \|s_k\|^3
            \leq -\frac{\sqrt{3}\gamma}{16}\underline{\alpha}^3,
        \end{aligned}
\end{equation*}
where the last inequality is due to $\|s_k\| = \sqrt{1-|t_k|^2} \geq \sqrt{1-\nu^2} \geq \sqrt{3}/2$.

Otherwise, if the backtracking line search not stop at the iteration $j \geq 0$ and the decrease condition is not met, i.e., $f(x_k + {\alpha}_{k,j}d_k) - f(x_k) > -\frac{\gamma}{6}\alpha_{k,j}^3\|\overline{d}_k\|^3 = -\frac{\gamma}{6}\alpha_{k,j}^3\|s_k\|^3$. Using the \eqref{lemma3.1} in Lemma \ref{lemma3} and Lemma \ref{lemma5}, we have the following results
\begin{equation}
        \label{lemma7.1}
       \overline{d}_k^T \overline{B}_k \overline{d}_k = - \theta_k \|\overline{d}_k\|^2 - t_k^2 \cdot (\delta - \theta_k) \quad \text{and} \quad
        \overline{g}_k^T \overline{d}_k = |t_k| \cdot(\delta - \theta_k).
    \end{equation}
From the definition of $B_k$ in \eqref{fangshedk}
\begin{equation}\label{lemma7.2}
       \overline{d}_k^T \overline{B}_k \overline{d}_k = \overline{d}_k^T \left(\overline{H}_k+\overline{C}_k\right) \overline{d}_k 
        ={d}_k^T H_k {d}_k + \overline{d}_k^T \overline{C}_k \overline{d}_k
\end{equation}
holds. Based on the above analysis, the following result
\begin{equation}\label{lemma7.3}
          {d}_k^T{H}_k {d}_k=- \theta_k \|\overline{d}_k\|^2 - t_k^2 \cdot (\delta - \theta_k)-\overline{d}_k^T \overline{C}_k \overline{d}_k \leq \theta_k \|\overline{d}_k\|^2 - t_k^2 \cdot (\delta - \theta_k)
\end{equation}
can be obtained due to the fact that $\overline{d}_k^T \overline{C}_k \overline{d}_k \geq 0$.
$\alpha_{k,j} \in (0,1)$, so $\alpha_{k,j} - \frac{\alpha_{k,j}^2}{2} \geq 0$ and further
\begin{equation}\label{lemma7.4}
\left(\alpha_{k,j} - \frac{\alpha_{k,j}^2}{2}\right) \cdot (\delta-\theta_k) \leq 0.
\end{equation}
With the results derived above and \eqref{lemma6.1b}, we get that
\begin{eqnarray}\label{lemma7.5}
            &&-\frac{\gamma}{6}\alpha_{k,j}^3\|s_k\|^3 \notag\\
                                                 & <& f(x_k + \alpha_{k,j} d_k) - f(x_k)\notag\\                                                                                                                                                       
                                                 &\overset{\rm\eqref{lemma6.1b}} \leq& \alpha_{k,j} \cdot g_k^T d_k + \frac{\alpha_{k,j}^2}{2} \cdot d_k^T H_k d_k + \frac{L_H}{6}\alpha_{k,j}^3\|d_k\|^3\notag\\ 
                                                 &=& \alpha_{k,j} \cdot \overline{g}_k^T \overline{d}_k + \frac{\alpha_{k,j}^2}{2} \cdot d_k^T H_k d_k + \frac{L_H}{6}\alpha_{k,j}^3\|d_k\|^3\notag\\                                                                                  
                                                 &\overset{\rm\eqref{lemma7.1},\rm\eqref{lemma7.3}} \leq& \alpha_{k,j} \cdot |t_k| \cdot (\delta-\theta_k) - \frac{\alpha_{k,j}^2}{2} \cdot \theta_k\|\overline{d}_k\|^2 - \frac{\alpha_{k,j}^2}{2} \cdot t_k^2 \cdot (\delta - \theta_k)  + \frac{L_H}{6}\alpha_{k,j}^3\gamma_D^3\|\overline{d}_k\|^3    \notag\\
                                                 &\leq &\left(\alpha_{k,j} - \frac{\alpha_{k,j}^2}{2}\right) \cdot t_k^2 \cdot (\delta-\theta_k) -  \frac{\alpha_{k,j}^2}{2} \cdot \theta_k \|s_k\|^2 + \frac{L_H}{6}\alpha_{k,j}^3\gamma_D^3\|s_k\|^3                            \notag\\
                                                 &\overset{\rm\eqref{lemma7.4}} \leq& - \frac{\alpha_{k,j}^2}{2} \cdot \theta_k\|s_k\|^2 + \frac{L_H}{6}\alpha_{k,j}^3\gamma_D^3 \|s_k\|^3 \notag\\                                                                                                         
                                                 & \leq& - \frac{\alpha_{k,j}^2}{2} \cdot \delta \|s_k\|^2 + \frac{L_H}{6}\alpha_{k,j}^3\gamma_D^3\|s_k\|^3.
\end{eqnarray} 
Therefore, $\alpha_{k,j} \geq \frac{3\delta}{\left(L_H\gamma_D^3+\gamma\right)\|s_k\|}$ holds, which further implies that
\begin{equation}\label{j}
 \beta^j\geq \frac{3\delta}{\left(L_H\gamma_D^3+\gamma\right)\|s_k\|\alpha_{k,0}}.
\end{equation}
Then, due to $\|s_k\| \leq 1$ and $\alpha_{k,0}\in(0,1)$
\begin{equation*}
j_N \geq \log_\beta\left(\frac{3\delta}{\left(L_H\gamma_D^3+\gamma\right)\|s_k\|\alpha_{k,0}}\right), 
\end{equation*}
where $j_N := \left \lceil \log_\beta\left(\frac{3\delta}{L_H\gamma_D^3+\gamma}\right) \right\rceil$. Such $j=j_N$ does not satisfy the inequality \eqref{lemma7.5}, which means that decrease condition holds. Therefore, the iteration number of backtracking subroutine $j_k$ is upper bounded by $j_N$.
%Assume that $\alpha_{\text{last}}$ is the last stepsize that fails to meet \eqref{decrease condition}. Thus, it follows that 
%\begin{equation*}
%\alpha_{\text{last}} \geq \frac{3\delta}{\left(L_H\gamma_D^3+\gamma\right)\|v_k\|},
%\end{equation*}
%which implies that $\alpha_k$ is the first stepsize to satisfy the condition \eqref{decrease condition}
%\begin{equation*}
%\alpha_k = \beta \cdot \alpha_{\text{last}} \geq \beta \cdot \frac{3\delta}{\left(L_H\gamma_D^3+\gamma\right)\|v_k\|}.
%\end{equation*}
The function value decreases as follows
\begin{equation*}
        \begin{aligned}
            &f(x_{k+1}) - f(x_k)\\
             & \leq -\frac{\gamma}{6}\alpha_k^3\|s_k\|^3 
             \leq -\frac{\gamma\beta^3}{6}\alpha_{k,0}^3 \beta^{3(j-1)}\|s_k\|^3 
              \overset{\rm{\eqref{j}}}\leq -\frac{\gamma}{6} \cdot \frac{27\delta^3 \beta^3 }{\left(L_H\gamma_D^3+\gamma\right)^3}
             = - \frac{9\gamma\beta^3\delta^3}{2\left(L_H\gamma_D^3+\gamma\right)^3}.
        \end{aligned}
\end{equation*}
%where the last inequality comes from $\beta^{j_k - 1} \geq \frac{3\delta}{(L_H\gamma_D^3+\gamma)\|v_k\|}$.
%Since $\{x_k\}$ is the sequence generated by Algorithm 1, $\{f(x_k)\}$ is monotonically decreasing. Together with ${\alpha}_{k,j}=\min\left\{ \hat{\alpha}_{k,j}, \alpha_k^{\text{max}} \right\} \leq \hat{\alpha}_{k,j}$, we obtain that
%\begin{equation*}
%f(x_{k+1}) - f(x_k)= f(x_k + {\alpha}_{k,j}d_k) - f(x_k) \leq  f(x_k + \hat{\alpha}_{k,j}d_k) - f(x_k).
%\end{equation*}

Based on the function value decrease amounts corresponding to the two cases, we have
\begin{equation*}
\begin{aligned}
f(x_{k+1}) - f(x_k) &\leq -\min \left\{\frac{\sqrt{3}\gamma}{16} \underline{\alpha}^3, \frac{9\gamma\beta^3\delta^3}{2(L_H\gamma_D^3+\gamma)^3}\right\}\\
%&= \min \left\{-\frac{\sqrt{3}\gamma}{16}, -\frac{9\gamma\beta^3\delta^3}{2\left(L_H\gamma_D^3+\gamma\right)^3}\right\},
\end{aligned}
\end{equation*}
%where the last equation holds because $\alpha_k^{\text{max}} \leq 1$. This completes the proof.
\end{proof}

\begin{lemma}\label{lemma8}
    Suppose that assumptions (A1)-(A3) hold and set $\nu \in (0, 1/2)$. If $|t_k| \ge \nu$ and $\|s_k / t_k\| > \Delta$, then let $\overline{d}_k = s_k/t_k$. The backtracking line search terminates with $\alpha_{k,j} = \beta^{j_k} \alpha_{k,0}$, and $j_k$ is upper bounded by
\begin{equation*}
        j_N := \left \lceil \log_\beta\left(\frac{3\delta\nu}{L_H\gamma_D^3+\gamma}\right) \right\rceil,
\end{equation*}
and the function value associated with the stepsize $\alpha_k$ satisfies
    \begin{equation}\label{decrease2}
        f(x_{k+1}) - f(x_k) \leq -\min \left\{-\frac{\gamma \underline{\alpha}^3}{6}\Delta^3, \frac{9\gamma\beta^3\delta^3}{2\left(L_H\gamma_D^3+\gamma\right)^3}\right\}.
    \end{equation}
\end{lemma}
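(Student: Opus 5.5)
We mirror the proof of Lemma \ref{lemma7}, now with $\overline{d}_k = s_k/t_k$ and $|t_k|\ge \nu$. The argument splits according to whether the first trial step is accepted or backtracking occurs. By (A1) the iterates stay a uniform distance $\underline{\xi}$ from the bounds. Moreover, $\|d_k\|\le \gamma_D\|\overline{d}_k\|\le \gamma_D\|s_k\|/\nu\le \gamma_D/\nu$ is bounded. Together these show, exactly as in Lemma \ref{lemma7}, that $\alpha_{k,0}\ge \underline{\alpha}>0$ uniformly in $k$.

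\emph{Case 1: acceptance at $j=0$.} Then \eqref{decrease condition} gives
\[
f(x_{k+1})-f(x_k)\le -\tfrac{\gamma}{6}\alpha_{k,0}^3\|\overline{d}_k\|^3 \le -\tfrac{\gamma}{6}\underline{\alpha}^3\Delta^3,
\]
using the hypothesis $\|s_k/t_k\|>\Delta$. The statement writes this term with a stray minus sign inside the $\min$. The intended bound is clearly $-\min\{\tfrac{\gamma\underline{\alpha}^3}{6}\Delta^3,\cdot\}$, and I would prove that version.

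\emph{Case 2: rejection at some $j$.} Then $f(x_k+\alpha_{k,j}d_k)-f(x_k)>-\tfrac{\gamma}{6}\alpha_{k,j}^3\|\overline{d}_k\|^3$. Since $t_k\neq 0$, Lemma \ref{lemma3}(2) gives
\[
\overline{g}_k^T\overline{d}_k=\delta-\theta_k<0, \qquad \overline{d}_k^T\overline{B}_k\overline{d}_k=-\theta_k\|\overline{d}_k\|^2-\overline{g}_k^T\overline{d}_k=-\theta_k\|\overline{d}_k\|^2-(\delta-\theta_k).
\]
Using $\overline{C}_k\succeq 0$, we get $d_k^TH_kd_k\le -\theta_k\|\overline{d}_k\|^2-(\delta-\theta_k)$. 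Also $g_k^Td_k=\overline{g}_k^T\overline{d}_k$. Inserting these into \eqref{lemma6.1b} yields
\[
f(x_k+\alpha d_k)-f(x_k)\le \left(\alpha-\tfrac{\alpha^2}{2}\right)(\delta-\theta_k)-\tfrac{\alpha^2}{2}\theta_k\|\overline{d}_k\|^2+\tfrac{L_H}{6}\alpha^3\gamma_D^3\|\overline{d}_k\|^3.
\]
The first term is nonpositive because $\alpha\in(0,1)$, and $\theta_k>\delta$ by Lemma \ref{lemma2}. Combining with the failed test and dividing by $\alpha^2\|\overline{d}_k\|^2$ gives
\[
\alpha_{k,j}\ge \frac{3\delta}{(L_H\gamma_D^3+\gamma)\|\overline{d}_k\|}.
\]

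The main obstacle is turning this into the stated $j_N$ and the $\delta^3$ decrease, since $\|\overline{d}_k\|$ is now not bounded by $1$. For the iteration count, use $\|\overline{d}_k\|\le 1/|t_k|\le 1/\nu$. Then every rejected trial satisfies $\beta^j\ge \beta^j\alpha_{k,0}\ge 3\delta\nu/(L_H\gamma_D^3+\gamma)$, which fails once $j\ge j_N$. Hence $j_k\le j_N$.

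For the decrease, the accepted step is $\alpha_k=\beta\alpha_{k,j_k-1}$, so
\[
\alpha_k\|\overline{d}_k\|\ge \frac{3\beta\delta}{L_H\gamma_D^3+\gamma}.
\]
The factor $\|\overline{d}_k\|$ cancels, leaving the lower bound independent of $\nu$. Plugging this into \eqref{decrease condition} gives
\[
f(x_{k+1})-f(x_k)\le -\frac{\gamma}{6}\cdot\frac{27\beta^3\delta^3}{(L_H\gamma_D^3+\gamma)^3}=-\frac{9\gamma\beta^3\delta^3}{2(L_H\gamma_D^3+\gamma)^3}.
\]
Taking the worse of Cases 1 and 2 gives \eqref{decrease2}.
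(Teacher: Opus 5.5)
Your proof is correct and follows the paper's argument essentially step for step. It uses the same split into acceptance at $j=0$ (via $\alpha_{k,0}\ge\underline{\alpha}$ and $\|\overline{d}_k\|>\Delta$) versus backtracking, the same combination of Lemma~\ref{lemma3}(2), $\overline{C}_k\succeq 0$ and \eqref{lemma6.1b} to get $\alpha_{k,j}\ge 3\delta/((L_H\gamma_D^3+\gamma)\|\overline{d}_k\|)$, the same use of $\|\overline{d}_k\|\le 1/\nu$ for $j_N$, the cancellation of $\|\overline{d}_k\|$ for the $\delta^3$ decrease, and the same reading of the stray minus sign (the bound the paper derives at the end of its proof, which also implies the literal statement trivially).
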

\begin{proof}
%Similar to Lemma \ref{lemma7}, suppose that the backtracking line search terminates with $\alpha_k=\alpha_{k,0}=1$, we have
%\begin{equation*}
%        \begin{aligned}
%            f(x_{k+1}) - f(x_k)
%             \leq -\frac{\gamma}{6}\alpha_k^3\|\overline{d}_k\|^3  
%             = -\frac{\gamma}{6}\alpha_k^3\Delta^3
%             \leq -\frac{\gamma\Delta^3}{6},
%        \end{aligned}
%\end{equation*}
%where the last inequality comes from $\|\overline{d}_k\| > \Delta$. When $\alpha_k=\alpha_{k,0}=\alpha_k^{\text{max}}$, it follows that
%\begin{equation*}
%        \begin{aligned}
%            f(x_{k+1}) - f(x_k)
%             \leq -\frac{\gamma}{6}\alpha_k^3\|\overline{d}_k\|^3  
%             \leq -\frac{\gamma\Delta^3}{6} \left(\alpha_k^{\text{max}}\right)^3
%             \leq -\frac{\gamma\bar{\alpha}^3}{6} \Delta^3.
%        \end{aligned}
%\end{equation*}\\
%The last inequality follows by a similar argument as in Lemma \ref{lemma7}. 
%Here $\|d_k\|$ is also bounded as follows
%\begin{equation}\label{dkbodcase2}
%\|d_k\| \leq \gamma_D \cdot \frac{\|s_k\|}{|t_k|}=\gamma_D \cdot \frac{\sqrt{1-|t_k|^2}}{|t_k|} \leq \frac{\gamma_D}{\nu}.
%\end{equation}
Similar to Lemma \ref{lemma7}, suppose that the backtracking line search terminates with $\alpha_k=\alpha_{k,0}$, we have
\begin{equation*}
        \begin{aligned}
            f(x_{k+1}) - f(x_k)
             \leq -\frac{\gamma}{6}\alpha_k^3\|\overline{d}_k\|^3  
             \leq -\frac{\gamma}{6}\alpha_{k,0}^3\Delta^3
             \leq -\frac{\gamma \underline{\alpha}^3}{6}\Delta^3,
        \end{aligned}
\end{equation*}

If $\alpha_k =\alpha_{k,0}$ does not lead to a sufficient decrease, then for any $j \geq 0$ where the decrease condition is not met. When $\overline{d}_k = s_k / t_k$, using the \eqref{lemma3.3} in Lemma \ref{lemma3} and Lemma \ref{lemma5}, we have the following results
\begin{equation}
        \label{lemma8.1}
       \overline{d}_k^T \overline{B}_k \overline{d}_k = - \theta_k \|\overline{d}_k\|^2 - \overline{g}_k^T \overline{d}_k \quad \text{and} \quad
        \overline{g}_k^T \overline{d}_k = \left(\delta - \theta_k\right)\leq 0.
    \end{equation} 
Combine \eqref{lemma7.2} and \eqref{lemma8.1}, we have that 
\begin{equation}\label{lemma8.2}
          {d}_k^T {H}_k {d}_k=- \theta_k \|\overline{d}_k\|^2 - \left(\delta - \theta_k\right)-\overline{d}_k^T \overline{C}_k \overline{d}_k \leq - \theta_k \|\overline{d}_k\|^2 - \left(\delta - \theta_k\right).
\end{equation}
In combination with the results obtained above, we obtain
\begin{eqnarray}\label{lemma8.3}
           && -\frac{\gamma}{6}\alpha_{k,j}^3\|\overline{d}_k\|^3 \notag\\
                                                 & <& f(x_k + \alpha_{k,j} d_k) - f(x_k) \notag\\                                                                                                      
                                                 &\overset{\rm{\eqref{lemma6.1b}}} \leq& \alpha_{k,j} \cdot \overline{g}_k^T \overline{d}_k + \frac{\alpha_{k,j}^2}{2} \cdot d_k^T H_k d_k + \frac{L_H}{6}\alpha_{k,j}^3\|d_k\|^3 \notag\\ 
                                                 &\overset{\rm{\eqref{lemma8.2}}} \leq &\alpha_{k,j} \cdot \left(\delta - \theta_k\right) - \frac{\alpha_{k,j}^2}{2} \cdot \theta_k\|\overline{d}_k\|^2 - \frac{\alpha_{k,j}^2}{2}\cdot \left(\delta - \theta_k\right)  + \frac{L_H}{6}\alpha_{k,j}^3\gamma_D^3\|\overline{d}_k\|^3 \notag\\ 
                                                 & =&\left(\alpha_{k,j} - \frac{\alpha_{k,j}^3}{2}\right) \cdot \left(\delta - \theta_k\right) -\frac{\alpha_{k,j}^2}{2}\theta_k\|\overline{d}_k\|^2 + \frac{L_H}{6}\alpha_{k,j}^3\gamma_D^3\|\overline{d}_k\|^3 \notag\\ 
                                                 &\overset{\rm\eqref{lemma7.4}} \leq& -\frac{\alpha_{k,j}^2}{2}\delta\|\overline{d}_k\|^2 + \frac{L_H}{6}\alpha_{k,j}^3\gamma_D^3\|\overline{d}_k\|^3.                    
\end{eqnarray} 
Therefore, $\alpha_{k,j}\geq \frac{3\delta}{\left(L_H\gamma_D^3+\gamma\right)\|\overline{d}_k\|}$ holds, which further implies that
\begin{equation}\label{j2}
\beta^j\geq \frac{3\delta}{\left(L_H\gamma_D^3+\gamma\right)\|\overline{d}_k\|\alpha_{k,0}}.
\end{equation}
Note that
\begin{equation*}
\|\overline{d}_k\|=\frac{\|s_k\|}{|t_k|}=\frac{\sqrt{1-|t_k|^2}}{|t_k|} \leq \frac{1}{\nu}
\end{equation*}
holds due to $|t_k| \geq \nu$. Thus,
\begin{equation*}
j_N \geq \log_\beta\left(\frac{3\delta}{\left(L_H\gamma_D^3+\gamma\right)\|\overline{d}_k\|\alpha_{k,0}}\right), 
\end{equation*}
where $j_N := \left \lceil \log_\beta\left(\frac{3\delta\nu}{L_H\gamma_D^3+\gamma}\right) \right\rceil$. This means that inequality \eqref{lemma8.3} does not hold when $j=j_N$, which further shows that decrease condition is satisfied. It follows that the iteration number of backtracking subroutine $j_k$ is upper bounded by $j_N$,
%Similar to Lemma \ref{lemma7}, 
and the function value decreases as follows
\begin{equation*}
         \begin{aligned}
            &f(x_{k+1}) - f(x_k)\\
              & \leq -\frac{\gamma}{6}\alpha_k^3\|\overline{d}_k\|^3 
             \leq -\frac{\gamma\beta^3}{6}\alpha_{k,0}^3 \beta^{3(j-1)}\|\overline{d}_k\|^3 
              \overset{\rm{\eqref{j2}}}\leq -\frac{\gamma}{6} \cdot \frac{27\delta^3 \beta^3 }{\left(L_H\gamma_D^3+\gamma\right)^3}
             = - \frac{9\gamma\beta^3\delta^3}{2\left(L_H\gamma_D^3+\gamma\right)^3}.
        \end{aligned}
\end{equation*}
In summary, the function value associated with the stepsize $\alpha_k$ satisfies 
\begin{equation*}
\begin{aligned}
        f(x_{k+1}) - f(x_k) &\leq - \min \left\{\frac{\gamma\underline{\alpha}^3}{6} \Delta^3, \frac{9\gamma\beta^3\delta^3}{2\left(L_H\gamma_D^3+\gamma\right)^3}\right\}\\
      %  &\leq \min \left\{\frac{\gamma\Delta^3}{6}, \frac{9\gamma\beta^3\delta^3}{2\left(L_H\gamma_D^3+\gamma\right)^3}\right\}.
        \end{aligned}
\end{equation*}
Then, the proof is complete.
\end{proof}

By combining the two lemmas mentioned above, we can now establish a unified descent property for homogenised negative curvature with a backtracking line search.
\begin{lemma}\label{lemma9}
Suppose that assumptions (A1)-(A3) hold and set $\nu \in (0, 1/2)$. Let the backtracking line search parameters $\beta, \gamma$ satisfy $\beta \in (0, 1)$ and $\gamma > 0$. Then, after every outer iterate, the function value decreases as
\begin{equation*}
        f(x_{k+1}) - f(x_k) \leq -\min\left\{\frac{\sqrt{3}\gamma}{16} \underline{\alpha}^3, \frac{\gamma\underline{\alpha}^3}{6} \Delta^3, \frac{9\gamma\beta^3\delta^3}{2\left(L_H\gamma_D^3+\gamma\right)^3}\right\}.
\end{equation*}
    and the iteration for backtracking line search is at most
\begin{equation*}
        j_N \leq \max\left\{\left\lceil \log_\beta\left(\frac{3\delta}{L_H\gamma_D^3+\gamma}\right) \right\rceil, \left \lceil \log_\beta\left(\frac{3\delta\nu}{L_H\gamma_D^3+\gamma}\right) \right\rceil\right\} = \left \lceil \log_\beta\left(\frac{3\delta\nu}{L_H\gamma_D^3+\gamma}\right) \right\rceil.
\end{equation*}
\end{lemma}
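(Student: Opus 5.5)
The plan is to combine Lemma \ref{lemma7} and Lemma \ref{lemma8} by a case split on $|t_k|$. At any iteration that reaches the line search, the early-termination test $|t_k|>\sqrt{1/(1+\Delta^2)}$ has failed. Two cases then remain: either $|t_k|<\nu$, or $\nu\le |t_k|\le \sqrt{1/(1+\Delta^2)}$.

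\emph{First case, $|t_k|<\nu$.} The algorithm takes $\overline{d}_k=\text{sign}(-\overline{g}_k^Ts_k)\cdot s_k$. Lemma \ref{lemma7} applies directly and gives the decrease \eqref{decrease1}, with the backtracking count bounded by $\lceil \log_\beta(3\delta/(L_H\gamma_D^3+\gamma))\rceil$.

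\emph{Second case, $\nu\le|t_k|\le\sqrt{1/(1+\Delta^2)}$.} Here I must check the hypothesis $\|s_k/t_k\|>\Delta$ of Lemma \ref{lemma8}. By \eqref{lemma2.3} we have $\|s_k\|^2=1-t_k^2$, so
\[
\|s_k/t_k\|^2=\frac{1}{t_k^2}-1\ge (1+\Delta^2)-1=\Delta^2 .
\]
This gives only $\ge\Delta$, while Lemma \ref{lemma8} asks for strict inequality. However, its proof only uses $\|\overline{d}_k\|\ge\Delta$, so the non-strict bound is harmless. Lemma \ref{lemma8} then yields \eqref{decrease2}. Its first entry is printed with a spurious minus sign; I read it as $\frac{\gamma\underline{\alpha}^3}{6}\Delta^3$, as in the final display of that proof. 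The backtracking count is bounded by $\lceil\log_\beta(3\delta\nu/(L_H\gamma_D^3+\gamma))\rceil$.

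\emph{Combining the cases.} Every step decrease is at least the smaller of the two case-wise minima. This smaller value is exactly the minimum of the three quantities $\frac{\sqrt3\gamma}{16}\underline{\alpha}^3$, $\frac{\gamma\underline{\alpha}^3}{6}\Delta^3$ and $\frac{9\gamma\beta^3\delta^3}{2(L_H\gamma_D^3+\gamma)^3}$, which proves the first claim.

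For the line search bound, $j_k$ is at most the larger of the two ceilings. Since $\nu\in(0,1/2)$, we have
\[
\frac{3\delta\nu}{L_H\gamma_D^3+\gamma}<\frac{3\delta}{L_H\gamma_D^3+\gamma}.
\]
Because $\beta\in(0,1)$, the map $\log_\beta$ is decreasing, so the logarithm of the $\nu$-version is larger. Its ceiling is therefore the maximum, which gives the stated equality.

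I expect the only delicate points to be bookkeeping: matching the case boundary with the hypothesis of Lemma \ref{lemma8} (strict versus non-strict inequality), and the monotonicity direction of $\log_\beta$ for $\beta<1$. I would state both explicitly.
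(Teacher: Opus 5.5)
Your proposal is correct and matches the paper, which gives no separate proof but obtains Lemma~\ref{lemma9} by combining Lemma~\ref{lemma7} (case $|t_k|<\nu$) with Lemma~\ref{lemma8} (case $\nu\le|t_k|\le\sqrt{1/(1+\Delta^2)}$), taking the smaller of the two decrease bounds and the larger of the two backtracking counts. Your explicit treatment of the non-strict bound $\|s_k/t_k\|\ge\Delta$, the stray minus sign in \eqref{decrease2}, and the monotonicity of $\log_\beta$ for $\beta\in(0,1)$ supplies bookkeeping the paper leaves implicit.
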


\subsection{Analysis for the scenario of small value}\label{section4.2}
In this subsection, we consider the small value case where $\|\overline{d}_k\| \le \Delta$. Note that in the case of $|t_k| \ge \sqrt{1/(1+\Delta^2)}$, we set the homogenised direction as $\overline{d}_k=s_k/t_k$. Evidently, $\|\overline{d}_k\| = \|s_k\|/|t_k| = \sqrt{1-|t_k|^2}/|t_k| \le \Delta$ holds, validating the name of the small value case in Algorithm \ref{SOBASIP}. Under this scenario, 
we will proceed to prove that the next iteration point $x_{k+1}=x_k+D_k^{-1}\overline{d}_k$ is a $\epsilon$-approximate SOSP. Consequently, for the small value case, terminating the algorithm after one iteration is feasible. To prove this result, we provide an upper bound of $\|g_k\|$ for preparation.
\begin{lemma}\label{lemma10}
Suppose that assumptions (A2)-(A3) hold. If $g_k \neq 0$, and $\|\overline{d}_k\| \leq \Delta \leq \sqrt{2}/2\gamma_D$, then we have
    \begin{equation}\label{bound.gk}
        \|g_k\| \leq 2U_H \gamma_D \delta\Delta.
    \end{equation}
\end{lemma}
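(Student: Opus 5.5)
The plan is to work entirely from the optimality relations of Lemma~\ref{lemma3} in the small value case, and then absorb the self-referential term using the restriction on $\Delta$. Since $|t_k|\ge\sqrt{1/(1+\Delta^2)}>0$, we have $\overline{d}_k=s_k/t_k$ with $\|\overline{d}_k\|\le\Delta$. Equation \eqref{lemma3.3} then gives the two identities
\begin{equation*}
\left(\overline{B}_k+\theta_k I\right)\overline{d}_k=-\overline{g}_k,\qquad \theta_k-\delta=-\overline{g}_k^T\overline{d}_k .
\end{equation*}
The second identity, together with Cauchy--Schwarz, bounds the gap between the dual variable and the perturbation parameter:
\begin{equation*}
0<\theta_k-\delta\le\|\overline{g}_k\|\,\Delta .
\end{equation*}
Positivity of the gap comes from Lemma~\ref{lemma2}. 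In words, $\theta_k$ equals $\delta$ up to a term that is linear in the gradient and in $\Delta$.

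Next I return to the unscaled quantities. I multiply the first identity on the left by $D_k$ and use $\overline{B}_k=D_k^{-1}H_kD_k^{-1}+\overline{C}_k$ and $d_k=D_k^{-1}\overline{d}_k$. This gives
\begin{equation*}
-g_k=H_kd_k+D_k\overline{C}_k\overline{d}_k+\theta_k D_k\overline{d}_k .
\end{equation*}
The first term is controlled by (A2)--(A3): $\|H_kd_k\|\le U_H\gamma_D\Delta$. For the remaining terms, I substitute $\theta_k=\delta+(\theta_k-\delta)$ and plug in the gap bound from the first step. Because $\overline{g}_k=D_k^{-1}g_k$, that bound yields a contribution of the form $\gamma_D^2\Delta^2\|g_k\|$, i.e.\ a multiple of $\|g_k\|$ itself. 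I then move this term to the left-hand side. The hypothesis $\Delta\le\sqrt2/(2\gamma_D)$, equivalently $\gamma_D^2\Delta^2\le1/2$, makes the coefficient $1-\gamma_D^2\Delta^2$ at least $1/2$, and dividing by it is exactly where the factor $2$ in \eqref{bound.gk} comes from. The term $\delta D_k\overline{d}_k=\delta D_k^2 d_k$ is the one that should produce the $\delta$ factor in the final estimate.

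The main obstacle is the pair of terms carrying $D_k$ rather than $D_k^{-1}$, namely $D_k\overline{C}_k\overline{d}_k$ and $\theta_kD_k\overline{d}_k$. Assumption (A3) bounds only $D_k^{-1}$, so these are not directly controlled. What I would try first is a componentwise argument using the diagonal structure \eqref{Ck}. The entries of $\overline{C}_k$ are $|g_k^i|$ or $0$, and $D_k^2=\mathrm{diag}(|v^i|^{-1})$. I would pair each such term with the matching component of $g_k$ in the $i$-th equation, so that the unbounded factor $|v^i|^{-1}$ multiplies both sides and cancels, rather than estimating the terms in norm. If that cancellation fails, the fallback is to carry out the whole estimate in the scaled variables, which gives $\|\overline{g}_k\|\le 2(\gamma_B+\delta)\Delta$. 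I would then convert this to a bound on $g_k$ only on the components where $|v^i|$ is bounded away from zero, which (A1) guarantees uniformly along $X$.
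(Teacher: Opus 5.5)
Your plan cannot reach the stated inequality, and the real obstruction is the form of the bound itself, not the $D_k$-terms. In your identity $-g_k=H_kd_k+D_k\overline{C}_k\overline{d}_k+\theta_kD_k\overline{d}_k$, the Hessian term contributes $\|H_kd_k\|\le U_H\gamma_D\Delta$ with no factor $\delta$. After absorbing the $(\theta_k-\delta)$-part and dividing by $1-\gamma_D^2\Delta^2$, the most you can get is an additive estimate of the form $\|g_k\|\le 2(U_H\gamma_D+c\,\delta)\Delta$. Your fallback likewise gives $\|\overline{g}_k\|\le 2(\gamma_B+\delta)\Delta$. Neither implies $\|g_k\|\le 2U_H\gamma_D\delta\Delta$, where $\delta$ multiplies $U_H$, and the term $\delta D_k\overline{d}_k$ cannot supply that factor. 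Separately, the $D_k$-terms are indeed not controlled by (A2)--(A3). In the $i$-th component the factor $|v^i(x_k)|^{-1}$ multiplies $(\overline{C}_k)_{ii}+\theta_k$ but not $g_k^i$ or $(H_kd_k)^i$, so nothing cancels. Moving the $\overline{C}_k$-part to the left only rescales $g_k^i$ by $1\pm d_k^i/|v^i(x_k)|$, which need not stay away from zero. Converting your scaled bound back to $g_k$ needs a bound on $\|D_k\|$, i.e.\ (A1), which is not a hypothesis, and the result is still additive.

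No repair is possible, because the statement is false whenever $\delta<1/2$. Take $n=1$, $l=-\infty$, $u=+\infty$, and $f(x)=\tfrac{a}{2}x^2$ with $a>0$. Then $D_k=1$ and $\overline{C}_k=0$, and (A2)--(A3) hold with $U_H=\gamma_B=a$ and $\gamma_D=1$. Lemmas~\ref{lemma2}--\ref{lemma4} give $t_k\neq0$, $\theta_k>\delta\ge0$ and $\overline{d}_k=-g_k/(a+\theta_k)$, so $|\overline{d}_k|<|g_k|/a$. Any $x_k$ with $|g_k|=a\Delta$ and $\Delta\le\sqrt2/2$ satisfies every hypothesis, yet the conclusion would read $a\Delta\le 2a\delta\Delta$, i.e.\ $\delta\ge1/2$. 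So the lemma fails both for $\delta=\sqrt{\epsilon}$ in Theorem~\ref{theorem1} and for the local choice $\delta=0$.

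The paper's own proof obtains the product only through the unsupported step $\theta_k-\delta\ge t_2$, where $t_2$ is the positive root of $t^2+\delta\,(g_k^TH_kg_k/\|g_k\|^2)\,t-\|g_k\|^2$. In the example, $\theta_k-\delta$ is the positive root of $t^2+(a+\delta)t-g_k^2$, which is strictly smaller than $t_2$ when $\delta<1$. The justified version of that step uses $\|\overline{g}_k\|^4\le(\theta_k-\delta)\,\overline{g}_k^T(\overline{B}_k+\theta_kI)\overline{g}_k$. This follows from Cauchy--Schwarz, using $\overline{g}_k=-(\overline{B}_k+\theta_kI)\overline{d}_k$ and $\overline{B}_k+\theta_kI\succeq0$. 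That quadratic has linear coefficient $\delta+\overline{g}_k^T\overline{B}_k\overline{g}_k/\|\overline{g}_k\|^2$ and yields exactly your additive scaled bound $\|\overline{g}_k\|\le 2(\gamma_B+\delta)\Delta$ for $\Delta\le\sqrt2/2$. That bound is true, but it is strictly weaker than what the lemma asserts.
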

\begin{proof}
By Lemma \ref{lemma2} we see that $\theta_k-\delta >0$. Moreover, together with \eqref{lemma3.3} in Lemma \ref{lemma3}, we can get a upper bound of $\theta_k-\delta$ as follows
\begin{equation}\label{lemma10.1}
\theta_k-\delta \overset{\rm{\eqref{lemma3.3}}} =- \overline{g}_k^T \overline{d}_k =-g_k^T d_k \leq \|g_k\|\|D_k^{-1}\overline{d}_k\| \overset{\rm{\eqref{boundB.D}}} \leq \gamma_D \Delta\|g_k\|
\end{equation}
Define a univariate function $h(t) = t^2 + \delta(g_k^T H_k g_k / \|g_k\|^2)t  - \|g_k\|^2$. Moreover, it can be readily concluded that the equation $h(t)=0$ has two real roots of opposite signs. We let the positive real root be $t_2$. Employing the above inequality $\theta_k-\delta > 0$, we have $\theta_k-\delta \geq t_2$. Furthermore, it follows that 
\begin{equation*}
t_2 \leq \theta_k-\delta \overset{\rm{\eqref{lemma10.1}}} \leq \gamma_D \Delta\|g_k\|,
\end{equation*}
which implies that 
\begin{equation*}
        h\left(\gamma_D \Delta\|g_k\|\right) =
        \gamma_D^2\Delta^2\|g_k\|^2+ \frac{g_k^T H_k g_k}{\|g_k\|^2} \gamma_D \delta \Delta\|g_k\|-\|g_k\|^2\geq 0.
\end{equation*}
 After some algebra, we obtain
\begin{eqnarray*}
        \|g_k\|
         \leq \frac{g_k^T H_k g_k / \|g_k\|^2 \gamma_D \delta \Delta}{1-\gamma_D^2\Delta^2} 
         \overset{\rm{\eqref{bound.Hk}}} \leq \frac{U_H \gamma_D \delta\Delta}{1-\gamma_D^2\Delta^2}
         \leq 2 U_H \gamma_D \delta \Delta.
\end{eqnarray*}
The last inequality follows from $\Delta \leq \sqrt{2}/2\gamma_D$.
\end{proof}

The following lemma shows that $\|\overline{g}_{k+1}\|$ has an upper bound, while the smallest eigenvalue of the $\overline{B}_{k+1}$ has a lower bound.
\begin{lemma}\label{lemma11}
    Suppose that assumptions (A1)-(A3) hold. If $g_k \neq 0$, and $\| \overline{d}_k \| \leq \Delta$, then let $\alpha_k = 1$, we have
\begin{equation}\label{gk+1.bounded}
        \|\overline{g}_{k+1}\| \leq \frac{L_H^2\gamma_D}{4}\Delta^2 + 2 U_H \gamma_D^2 \delta \Delta, 
\end{equation}
and
\begin{equation}\label{Bk+1.bounded}
        \overline{B}_{k+1} \succeq - \left(L_H\gamma_D^2 + U_H + 4U_H \delta\right)\gamma_D \Delta I - \left( \frac{L_H^2}{2}U_H +\frac{L_H}{2}\gamma_D +2U_H\delta\gamma_D\right)\gamma_D\Delta^2 I-\delta I.
\end{equation}
\end{lemma}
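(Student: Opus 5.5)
We prove the two bounds separately. Both use the Taylor estimates of Lemma \ref{lemma6}, the optimality relations \eqref{lemma3.3} of Lemma \ref{lemma3}, and the gradient bound \eqref{bound.gk} of Lemma \ref{lemma10}. Throughout, $x_{k+1}=x_k+d_k$ with $d_k=D_k^{-1}\overline{d}_k$, so $\|d_k\|\le\gamma_D\Delta$ by (A3).

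\emph{Gradient bound.} Write $\overline{g}_{k+1}=D_{k+1}^{-1}g_{k+1}$ and split
\begin{equation*}
g_{k+1}=\bigl(g_{k+1}-g_k-H_kd_k\bigr)+\bigl(g_k+H_kd_k\bigr).
\end{equation*}
By \eqref{lemma6.1a}, the first bracket has norm at most $\frac{L_H}{2}\gamma_D^2\Delta^2$.

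For the second bracket we use \eqref{lemma3.3}. Multiplying $(\overline{B}_k+\theta_kI)\overline{d}_k=-\overline{g}_k$ by $D_k$ gives
\begin{equation*}
g_k+H_kd_k=-D_k\overline{C}_k\overline{d}_k-\theta_kD_k\overline{d}_k.
\end{equation*}
Rather than bound this through the possibly unbounded $D_k$, we multiply by $D_{k+1}^{-1}$ and treat the diagonal factors directly. The matrices $\overline{C}_k$ and $D_k^{-2}$ have diagonal entries built from $g_k^i$ and $|v^i|$. Hence the contribution of $\overline{C}_k\overline{d}_k$ is controlled by $\|g_k\|\,\|\overline{d}_k\|\le 2U_H\gamma_D\delta\Delta\cdot\Delta$, using Lemma \ref{lemma10}.

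The contribution of $\theta_k\overline{d}_k$ is handled by writing $\theta_k=\delta+(\theta_k-\delta)$. By \eqref{lemma10.1} and Lemma \ref{lemma10},
\begin{equation*}
\theta_k-\delta\le\gamma_D\Delta\|g_k\|\le 2U_H\gamma_D^2\delta\Delta^2.
\end{equation*}
Collecting these terms and absorbing constants should give $\|\overline{g}_{k+1}\|\le\frac{L_H^2\gamma_D}{4}\Delta^2+2U_H\gamma_D^2\delta\Delta$; the stated constants suggest some loose steps, so we aim for a bound of that shape.

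\emph{Eigenvalue bound.} Decompose
\begin{equation*}
\overline{B}_{k+1}=\overline{B}_k+(\overline{H}_{k+1}-\overline{H}_k)+(\overline{C}_{k+1}-\overline{C}_k).
\end{equation*}
Lemma \ref{lemma2} gives $\overline{B}_k\succeq-\theta_kI$, and with the bound on $\theta_k-\delta$ above this yields $\overline{B}_k\succeq-\delta I-2U_H\gamma_D^2\delta\Delta^2I$.

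Next, write
\begin{equation*}
\overline{H}_{k+1}-\overline{H}_k=D_{k+1}^{-1}(H_{k+1}-H_k)D_{k+1}^{-1}+\bigl(D_{k+1}^{-1}H_kD_{k+1}^{-1}-D_k^{-1}H_kD_k^{-1}\bigr).
\end{equation*}
The first term has norm at most $L_H\gamma_D^2\|d_k\|\le L_H\gamma_D^3\Delta$. For the second term, the entries of $D^{-1}$ are $|v^i|^{1/2}$, and $|v^i|$ moves by at most $|d_k^i|$ as long as the sign pattern of $g$ does not flip. So $\|D_{k+1}^{-1}-D_k^{-1}\|$ is $O(\gamma_D\Delta)$, and the second term is $O(U_H\gamma_D\Delta)$.

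The matrix $\overline{C}_{k+1}$ is positive semidefinite, and $\overline{C}_k$ is diagonal with $\|\overline{C}_k\|\le\|g_k\|\le 2U_H\gamma_D\delta\Delta$. Therefore $-\overline{C}_k\succeq-4U_H\delta\gamma_D\Delta I$, which is loose by a factor of two but matches the statement. Summing the three pieces gives the $\Delta$, $\Delta^2$ and $\delta$ terms of \eqref{Bk+1.bounded}.

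\emph{Main obstacle.} The hard step is controlling $D_{k+1}^{-1}$ against $D_k^{-1}$. Where some $g^i$ changes sign between $x_k$ and $x_{k+1}$, $v^i$ jumps from the distance to one bound to the distance to the other, so the perturbation is not $O(\Delta)$ componentwise.

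We would handle this as follows. For such indices, continuity gives $|g^i_{k+1}|\le\|g_{k+1}-g_k\|+\|g_k\|$, which is already small, so the products $|v^i|g^i$ remain controlled. The mismatch then enters only through $\operatorname{diag}(g)$-weighted terms, which can be bounded by $\|g_k\|$ and $L_H\|d_k\|^2$. This is presumably where the extra $L_H^2$ and $U_H L_H$ constants in the statement come from.
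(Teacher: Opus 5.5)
Your proposal has a genuine gap: neither inequality is actually derived, and what is missing is more than bookkeeping.

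\textbf{Gradient bound.} For \eqref{gk+1.bounded}, your identity $g_k+H_kd_k=-D_k(\overline{C}_k+\theta_kI)\overline{d}_k$ is correct and is the natural way to use \eqref{lemma3.3}. The paper does not use it. It writes $\|g_{k+1}\|\le\|g_{k+1}-g_k\|+\|g_k\|$ and takes $\|g_k\|\le 2U_H\gamma_D\delta\Delta$ from Lemma~\ref{lemma10}. It then asserts $\|g_{k+1}-g_k\|\le\frac{L_H^2}{4}\Delta^2$ ``by uniform continuity'' and multiplies by $\gamma_D$. Lemma~\ref{lemma6} only gives the first-order bound $U_H\gamma_D\Delta+\frac{L_H}{2}\gamma_D^2\Delta^2$ for that difference.

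In your route, multiplying by $D_{k+1}^{-1}$ leaves the diagonal factor $D_{k+1}^{-1}D_k$, with entries $\bigl(|v^i(x_{k+1})|/|v^i(x_k)|\bigr)^{1/2}$. You silently drop this factor. It is not controlled by any constant in the statement: it can be large when $|v^i(x_k)|$ is small or when $g^i$ changes sign, and it is bounded only through (A1).

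Even if the factor were at most $1$, you would be left with $\frac{L_H}{2}\gamma_D^3\Delta^2+\theta_k\|\overline{d}_k\|$, with $\theta_k>\delta$. This is not dominated by $\frac{L_H^2\gamma_D}{4}\Delta^2+2U_H\gamma_D^2\delta\Delta$ in general. Since the statement fixes explicit constants, ``absorbing constants'' is not a legitimate step.

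In fact the inequality with these constants fails. Take $n=1$, $l=-\infty$, $u=+\infty$, so $D_k=1$, $\overline{C}_k=0$, and $\gamma_D=1$ is admissible. Let $f(x)=\frac{a}{2}x^2+bx$ with $a=U_H=10^{-2}$, $L_H=1$, $\delta=\Delta=10^{-1}$, and $g_k=10^{-2}$.
\begin{itemize}
\item Then $\theta_k\approx 0.1009$ and $d_k\approx-0.0902$, which is the small-value branch.
\item Your identity gives exactly $g_{k+1}=-\theta_kd_k\approx 9.1\times10^{-3}$.
\item The right-hand side of \eqref{gk+1.bounded} is only $2.7\times10^{-3}$.
\item The same data violate Lemma~\ref{lemma10}: $|g_k|=10^{-2}>2U_H\gamma_D\delta\Delta=2\times10^{-4}$.
\end{itemize}
At best your route gives a bound of order $\Delta^2+\delta\Delta$, with constants depending on (A1) through $\sup_k\|D_{k+1}^{-1}D_k\|$. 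That is not the stated inequality.

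\textbf{Eigenvalue bound.} For \eqref{Bk+1.bounded}, the step $\overline{B}_k\succeq-\theta_kI\succeq-(\delta+2U_H\gamma_D^2\delta\Delta^2)I$ is the paper's. Your use of $\overline{C}_{k+1}\succeq0$ to get $\overline{C}_{k+1}-\overline{C}_k\succeq-\overline{C}_k\succeq-\|g_k\|I$ is valid and sharper than the paper's treatment. The paper bounds $\|\overline{C}_{k+1}-\overline{C}_k\|$ in norm by $\|g_{k+1}-g_k\|\,\|J^v_{k+1}\|+\|g_k\|\,\|J^v_{k+1}-J^v_k\|$. That, not the variation of $D$ as you guessed, is the source of the stated $U_H\gamma_D\Delta$, $\frac{L_H}{2}\gamma_D^2\Delta^2$ and $4U_H\gamma_D\delta\Delta$ terms.

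The gap is $\|D_{k+1}^{-1}-D_k^{-1}\|$.
\begin{itemize}
\item \emph{No sign change.} ``$|v^i|$ moves by at most $|d_k^i|$'' only yields $\bigl||v^i(x_{k+1})|^{1/2}-|v^i(x_k)|^{1/2}\bigr|\le|d_k^i|^{1/2}$, unless you import a lower bound on $|v^i|$ from (A1). The clean fix is to use $d_k^i=|v^i(x_k)|^{1/2}\,\overline{d}_k^i$. This gives $\bigl||v^i(x_{k+1})|^{1/2}-|v^i(x_k)|^{1/2}\bigr|\le|\overline{d}_k^i|\le\Delta$, hence $\|\overline{H}_{k+1}-\overline{H}_k\|\le(L_H\gamma_D^3+2U_H\gamma_D)\Delta$.
\item \emph{Sign change of some $g^i$ between $x_k$ and $x_{k+1}$.} Then $(D^{-1})_{ii}$ jumps between $(x^i-l^i)^{1/2}$ and $(u^i-x^i)^{1/2}$ (or $1$). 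This $O(1)$ jump enters $\overline{H}_{k+1}-\overline{H}_k$ multiplied by entries of $H$, not of $g$. Your remedy controls only $\operatorname{diag}(g)$-weighted quantities, so it does not apply, and $\|\overline{B}_{k+1}-\overline{B}_k\|$ need not be $O(\Delta)$.
\end{itemize}

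The paper does not overcome this either. Its $\frac{L_H^2}{2}U_H\gamma_D\Delta^2$ term comes from asserting $\|D_{k+1}^{-1}-D_k^{-1}\|\le\frac{L_H^2}{4}\Delta^2$ via \eqref{uniformly continuous}. That property is purely qualitative, and the variation is genuinely first order in $\|\overline{d}_k\|$ even without sign changes.

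So the eigenvalue bound needs a new ingredient. You would need either a way to handle or exclude sign changes of $g$ across the step, or an estimate of $\lambda_1(\overline{B}_{k+1})$ that does not pass through $\|\overline{B}_{k+1}-\overline{B}_k\|$. Either way the constants would differ from the stated ones. Note also that the bound $\|g_k\|\le 2U_H\gamma_D\delta\Delta$ you import from Lemma~\ref{lemma10} is itself unreliable, as the example above shows.
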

\begin{proof}
We first prove \eqref{gk+1.bounded}. With the uniform continuity of $\{g_k\}$, we can bound the norm of $\|g_{k+1}\|$ as follows
\begin{equation*}
          \|g_{k+1}\| \leq \|g_{k+1}-g_k\| + \|g_k\|
                      %&\overset{\rm{\eqref{lemma6.1a}}}\leq \|H_kd_k\| + \frac{M}{2}\|d_k\|^2+ \|g_k\|\\
%                      &\overset{\rm{\eqref{bound.Hk},\eqref{boundB.D}}}\leq  \gamma_DU_H\|\overline{d}_k\| + \frac{M}{2}\gamma_D^2\|\overline{d}_k\|^2+ \|g_k\|\\
                      %&\overset{\rm{\eqref{bound.gk}}}\leq  \gamma_DU_H \Delta + \frac{M}{2}\gamma_D^2\Delta^2 + 2 U_H \gamma_D \delta \Delta\\
                      \overset{\rm{\eqref{bound.gk}}} \leq \frac{L_H^2}{4}\Delta^2 + 2 U_H \gamma_D \delta \Delta.
                      %& = \frac{M}{2}\gamma_D^2\Delta^2 + (2\gamma_D\delta+ 3 \gamma_DU_H)\Delta.
\end{equation*}
Further
\begin{equation*}
%\|\overline{g}_{k+1}\| \leq \frac{M}{2}\Delta^2\gamma_D^3 +  (2\delta\gamma_D^2+3 U_H\gamma_D^2)\Delta.
\|\overline{g}_{k+1}\| \leq \gamma_D\|g_{k+1}\| \leq \frac{L_H^2\gamma_D}{4}\Delta^2 + 2 U_H \gamma_D^2 \delta \Delta.
\end{equation*}
Next, we prove \eqref{Bk+1.bounded}. Lemma \ref{lemma1} and optimality condition \eqref{lemma2.1} in Lemma \ref{lemma2} imply that
\begin{equation*}
\overline{B}_k + \theta_k \cdot I \succeq 0.
\end{equation*}
With \eqref{bound.gk} and \eqref{lemma10.1}, we further obtain 
\begin{align}
\overline{B}_k &\overset{\rm{\eqref{lemma10.1}}} \succeq -\theta_kI \succeq -\left(\gamma_D\Delta\|g_k\|+\delta\right)I \notag\\
    &\overset{\rm{\eqref{bound.gk}}} \succeq -2U_H\delta\gamma_D^2\Delta^2I-\delta I.
\label{lemma11.2}
\end{align}
Before bounding the $\overline{B}_{k+1}$, we provide an upper bound of the $\|\overline{B}_{k+1}-\overline{B}_k\|$ for preparation
\begin{eqnarray}\label{lemma11.3}
&&\|\overline{B}_{k+1}-\overline{B}_k\| \notag\\
&\leq& \|D_{k+1}^{-1}H_{k+1}D_{k+1}^{-1} -D_k^{-1}H_kD_k^{-1}\| +\|\operatorname{diag}\{g_{k+1}\}J_{k+1}^v-\operatorname{diag}\{g_k\}J_k^v\|. 
\end{eqnarray}
For the first half of \eqref{lemma11.3}, by employing the fact that $\{D_k^{-1}\}$ is uniformly continuous and $H(x)$ is Lipschitz continuous on $X$, we get 
%\begin{align}
%&&\|D_{k+1}^{-1}H_{k+1}D_{k+1}^{-1} -D_k^{-1}H_kD_k^{-1}\| \notag\\
%& =& \|D_{k+1}^{-1}(H_{k+1}-H_k)D_{k+1}^{-1}+(D_{k+1}^{-1}-D_k^{-1})H_kD_{k+1}^{-1} + D_k^{-1}H_k(D_{k+1}^{-1}-D_k^{-1})\| \notag\\
%& \leq &\|D_{k+1}^{-1}\|^2\|H_{k+1}-H_k\|+\|D_{k+1}^{-1}-D_k^{-1}\| \|H_k\| \|D_{k+1}^{-1}\|+ \|D_{k+1}^{-1}-D_k^{-1}\| \|H_k\| \|D_k^{-1}\| \notag\\
%& \overset{\rm{\eqref{lemma6.1b}}}\leq& L_H\gamma_D^2 \|d_k\| + 2\|D_{k+1}^{-1}-D_k^{-1}\| U_H\gamma_D \notag\\
%& \overset{\rm{\eqref{uniformly continuous}}}\leq& L_H\gamma_D^3 \|\overline{d}_k\| + \frac{L_H^2}{2}U_H \gamma_D\Delta^2 \notag\\
%& \leq& L_H\gamma_D^3 \Delta + \frac{L_H^2}{2}U_H \gamma_D\Delta^2.
%\label{lemma11.4}
%\end{align}
\begin{eqnarray}\label{lemma11.4}
&&\|D_{k+1}^{-1}H_{k+1}D_{k+1}^{-1} - D_k^{-1}H_kD_k^{-1}\| \notag\\
&=& \|D_{k+1}^{-1}(H_{k+1}-H_k)D_{k+1}^{-1} + (D_{k+1}^{-1}-D_k^{-1})H_kD_{k+1}^{-1} + D_k^{-1}H_k(D_{k+1}^{-1}-D_k^{-1})\| \notag\\
&\leq& \|D_{k+1}^{-1}\|^2\|H_{k+1}-H_k\| + \|D_{k+1}^{-1}-D_k^{-1}\| \|H_k\| (\|D_{k+1}^{-1}\| + \|D_k^{-1}\|) \notag\\
&\overset{\rm{\eqref{H.lipschitz},\eqref{bound.Hk}}}\leq& L_H\gamma_D^2 \|d_k\| + 2\|D_{k+1}^{-1}-D_k^{-1}\| U_H\gamma_D \notag\\
&\overset{\rm{\eqref{uniformly continuous}}}\leq& L_H\gamma_D^3 \|\overline{d}_k\| + \frac{L_H^2}{2}U_H \gamma_D\Delta^2 \notag\\
&\leq& L_H\gamma_D^3 \Delta + \frac{L_H^2}{2}U_H \gamma_D\Delta^2.
\end{eqnarray}
When considering the second half of \eqref{lemma11.3}, we can derive that as follows by making use of the property that $g_k$ is bounded above which is established in Lemma \ref{lemma10}.
\begin{eqnarray}\label{lemma11.5}
&& \|\operatorname{diag}\{g_{k+1}\}J_{k+1}^v-\operatorname{diag}\{g_k\}J_k^v\|\notag\\
& = & \|\operatorname{diag}\{g_{k+1}\}J_{k+1}^v-\operatorname{diag}\{g_k\}J_{k+1}^v+\operatorname{diag}\{g_k\}J_{k+1}^v-\operatorname{diag}\{g_k\}J_k^v\|\notag\\
& \leq& \|\operatorname{diag}\{g_{k+1}-g_k\}J_{k+1}^v\| + \|\operatorname{diag}\{g_k\}(J_{k+1}^v-J_k^v)\|\notag\\
& \leq& \|\operatorname{diag}\{g_{k+1}-g_k\}\| \|J_{k+1}^v\| + \|\operatorname{diag}\{g_k\}\| \|(J_{k+1}^v-J_k^v)\| \notag\\
& \leq& \|g_{k+1}-g_k\| \|J_{k+1}^v\| + \|g_k\| \|(J_{k+1}^v-J_k^v)\|\notag\\
%& \overset{\rm{\eqref{lemma6.1a},\eqref{lemma10.1}}} \leq (\|H_kd_k\| + \frac{M}{2}\|d_k\|^2) \|J_{k+1}^v\| +  2(U_H+\delta) \gamma_D \Delta\|(J_{k+1}^v-J_k^v)\|\\
& \overset{\rm{\eqref{lemma6.1a},\eqref{lemma10.1}}} \leq& \left(\|H_kd_k\| + \frac{L_H}{2}\|d_k\|^2\right) \|J_{k+1}^v\| +  2 U_H \gamma_D \delta \Delta\|(J_{k+1}^v-J_k^v)\|\notag\\
& \leq& \left(U_H\gamma_D\|\overline{d}_k\|+\frac{L_H}{2}\gamma_D^2\|\overline{d}_k\|^2\right) \|J_{k+1}^v\| +  2 U_H \gamma_D \delta \Delta\|(J_{k+1}^v-J_k^v)\|\notag\\
%& \leq U_H\gamma_D\Delta+\frac{M}{2}\gamma_D^2\Delta^2 + 4(U_H+\delta) \gamma_D \Delta.
& \leq& U_H\gamma_D\Delta+\frac{L_H}{2}\gamma_D^2\Delta^2 + 4 U_H \gamma_D \delta \Delta.
\end{eqnarray}
The last inequality follows from $J_k^v$ being a diagonal matrix, where diagonal entries are 1, -1, and 0, this further implies that $\|J_{k+1}^v\| \leq 1$ and $\|(J_{k+1}^v-J_k^v)\| \leq 2$.
Rearranging the terms above, we conclude that 
\begin{eqnarray}\label{lemma11.6}
&& \|\overline{B}_{k+1}-\overline{B}_k\|\notag\\
%&\leq \sqrt{2M\Delta}U_H\gamma_D + (5U_H\gamma_D + 4\delta)\Delta + M\gamma_D^3 \Delta + \frac{M}{2}\gamma_D^2\Delta^2 
& \overset{\rm{\eqref{lemma11.4}.\eqref{lemma11.5}}}\leq& \left(L_H\gamma_D^3 + U_H\gamma_D + 4U_H\gamma_D \delta\right) \Delta + \left( \frac{L_H^2}{2}U_H \gamma_D +\frac{L_H}{2}\gamma_D^2 \right)\Delta^2.
\end{eqnarray}
Finally, together with \eqref{lemma11.2} and \eqref{lemma11.6}, we are able to establish a lower bound on $\overline{B}_{k+1}$
\begin{equation*}
\begin{aligned}
\overline{B}_{k+1} &\succeq \overline{B}_k - \|\overline{B}_{k+1}-\overline{B}_k\|I\\
%        &\succeq -\sqrt{2M\Delta}U_H\gamma_D I- (5U_H\gamma_D + 4\delta+M\gamma_D^3)\Delta I-2(U_H+\delta+\frac{M}{2})\gamma_D^2\Delta^2I-\delta I
& \geq - \left(L_H\gamma_D^2 + U_H + 4U_H \delta\right) \gamma_D\Delta I - \left( \frac{L_H^2}{2}U_H  +\frac{L_H}{2}\gamma_D +2U_H\delta\gamma_D\right)\gamma_D\Delta^2 I-\delta I.
\end{aligned}
\end{equation*}
The proof is then complete.
\end{proof}

\subsection{The global convergence}\label{4.3}
Based on the previous discussions, we will present the global convergence results of SOBASIP in Theorem \ref{theorem1}. It shows that with appropriate selections of the perturbation parameter $\delta$ and $\Delta$, our algorithm can attain an iteration complexity of $O(\epsilon^{-3/2})$ when seeking an $\epsilon$-approximate second-order stationary point.
\begin{theorem}\label{theorem1}
Suppose that assumptions (A1)-(A3) hold, Let $\delta = \sqrt{\epsilon}$, $\Delta = 2\sqrt{\epsilon} / L_H$ and $\nu \in (0, 1/2)$, and the backtracking line search parameters $\beta, \gamma$ satisfy $\beta \in (0, 1)$ and $\gamma > 0$. Then Algorithm \ref{SOBASIP} terminates in at most $O\left(\epsilon^{-3/2}\log_\beta(\epsilon) \right)$ steps, and the next iterate $x_{k+1}$ is a SOSP. Specifically, the number of iterations is bounded by
\begin{equation*}
        O\left(\max\left\{\frac{3L_H^3}{4\gamma\underline{\alpha}^3}, \frac{2(L_H\gamma_D^3+\gamma)^3}{9\gamma\beta^3}\right\}\left \lceil \log_\beta\left(\frac{3\sqrt{\epsilon}\nu}{L_H\gamma_D^3+\gamma}\right) \right\rceil\left(f(x_1) - f_{\inf}\right)\epsilon^{-3/2}\right).
    \end{equation*}
\end{theorem}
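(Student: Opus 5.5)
The proof is a standard counting argument that splits the iterations into the large-value and small-value cases of Section \ref{section4}. Every iteration that does not hit the early-termination branch $|t_k|>\sqrt{1/(1+\Delta^2)}$ is a large-value iteration, with $\|\overline{d}_k\|>\Delta$. For such iterations I will invoke Lemma \ref{lemma9}, which gives a uniform decrease
\begin{equation*}
f(x_k)-f(x_{k+1}) \ge \min\left\{\tfrac{\sqrt{3}\gamma}{16}\underline{\alpha}^3,\ \tfrac{\gamma\underline{\alpha}^3}{6}\Delta^3,\ \tfrac{9\gamma\beta^3\delta^3}{2(L_H\gamma_D^3+\gamma)^3}\right\}.
\end{equation*}
Substituting $\delta=\sqrt{\epsilon}$ and $\Delta=2\sqrt{\epsilon}/L_H$ turns the second term into $\frac{4\gamma\underline{\alpha}^3}{3L_H^3}\epsilon^{3/2}$ and the third into $\frac{9\gamma\beta^3}{2(L_H\gamma_D^3+\gamma)^3}\epsilon^{3/2}$. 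For $\epsilon$ sufficiently small, the first, constant term is never the minimum. So each large-value iteration decreases $f$ by at least $\kappa^{-1}\epsilon^{3/2}$, where
\begin{equation*}
\kappa=\max\left\{\frac{3L_H^3}{4\gamma\underline{\alpha}^3},\ \frac{2(L_H\gamma_D^3+\gamma)^3}{9\gamma\beta^3}\right\}.
\end{equation*}

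Next I sum the decreases. By (A1)--(A2), $f$ is continuous on the compact set $X$, so $f_{\inf}:=\inf_X f>-\infty$. Telescoping over the large-value iterations shows that their number is at most $\kappa\,(f(x_1)-f_{\inf})\,\epsilon^{-3/2}$. Each such outer iteration costs at most $j_N+1$ function evaluations in the backtracking loop, where Lemma \ref{lemma9} gives $j_N=\lceil\log_\beta(3\sqrt{\epsilon}\nu/(L_H\gamma_D^3+\gamma))\rceil=O(\log_\beta\epsilon)$. Multiplying the two counts yields the stated bound $O(\epsilon^{-3/2}\log_\beta\epsilon)$ with the explicit constants displayed in the theorem. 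Consequently the algorithm must reach the small-value branch within that many steps.

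In the small-value branch the step is $\overline{d}_k=s_k/t_k$ with $\|\overline{d}_k\|\le\Delta$ and unit stepsize, so Lemma \ref{lemma11} applies directly. I first check the hypothesis $\Delta\le \sqrt{2}/(2\gamma_D)$ of Lemma \ref{lemma10}, which holds for small $\epsilon$. The degenerate case $g_k=0$ is handled by Lemma \ref{lemma6.add}: then $t_k=1$ when $\lambda_1(\overline{B}_k)>-\delta$, so $x_{k+1}=x_k$ and the claim is immediate. With $\Delta=\Theta(\sqrt\epsilon)$ and $\delta=\sqrt\epsilon$, \eqref{gk+1.bounded} gives
\begin{equation*}
\|\overline{g}_{k+1}\|\le \gamma_D\epsilon + \tfrac{4U_H\gamma_D^2}{L_H}\epsilon = O(\epsilon),
\end{equation*}
which is \eqref{def1.1}. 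Likewise, every term on the right of \eqref{Bk+1.bounded} is $O(\sqrt\epsilon)$: the $\Delta$ terms and $\delta$ are $\Theta(\sqrt\epsilon)$, and the $\Delta^2$ terms are $O(\epsilon)$. Hence $\lambda_1(\overline{B}_{k+1})\ge-\Omega(\sqrt\epsilon)$, which is \eqref{def1.2}. Therefore $x_{k+1}$ is an $\epsilon$-approximate SOSP in the sense of Definition \ref{definition2}.

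I expect the main obstacle to be the bookkeeping of constants rather than any new idea. The minimum in Lemma \ref{lemma9} must be shown to be attained by the $\epsilon^{3/2}$ terms, which relies on $\underline{\alpha}$ being independent of $\epsilon$ through (A1). The backtracking factor must be combined with the outer-iteration count correctly, which means counting function evaluations as the cost that the stated bound measures. I will also note that the uniform-continuity and Lipschitz estimates used in Lemma \ref{lemma11} require $x_{k+1}\in X$, which (A1) guarantees.
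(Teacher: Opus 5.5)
Your proposal is correct and follows essentially the same route as the paper. Lemma~\ref{lemma9} with $\delta=\sqrt{\epsilon}$ and $\Delta=2\sqrt{\epsilon}/L_H$ gives an $\Omega(\epsilon^{3/2})$ decrease per large-value iteration; telescoping against $f(x_1)-f_{\inf}$ and multiplying by the backtracking bound $j_N$ gives the stated count; and Lemma~\ref{lemma11} certifies that the terminal iterate satisfies Definition~\ref{definition2}. Your extra checks fill in points the paper leaves implicit: the hypothesis $\Delta\le\sqrt{2}/(2\gamma_D)$ of Lemma~\ref{lemma10}, the case $g_k=0$ via Lemma~\ref{lemma6.add}, and $f_{\inf}>-\infty$ by compactness of $X$. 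One small correction: write the eigenvalue conclusion as $\lambda_1(\overline{B}_{k+1})\ge -O(\sqrt{\epsilon})$, not $-\Omega(\sqrt{\epsilon})$.
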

\begin{proof}
Since we take $\delta = \sqrt{\epsilon}$ and $\Delta = 2\sqrt{\epsilon} / L_H $, by Lemma \ref{lemma9}, it can be readily established that the decrement of the function value is at least $\Omega(\epsilon^{3/2})$ for the large step case, i.e.,
\begin{equation*}
\begin{aligned}
           f(x_{k+1}) - f(x_k) & \leq -\min\left\{\frac{\sqrt{3}\gamma}{16} \underline{\alpha}^3, \frac{\gamma\underline{\alpha}^3}{6} \Delta^3, \frac{9\gamma\beta^3\delta^3}{2\left(L_H\gamma_D^3+\gamma\right)^3}\right\}\\
%                               &= -\min\left\{\frac{\sqrt{3}\gamma}{16}, \frac{4\gamma\epsilon^{3/2}}{3M^3}, \frac{9\gamma\beta^3\epsilon^{3/2}}{2(M\gamma_D^3+\gamma)^3}\right\} \\   
                                &\leq -\min\left\{\frac{4\gamma\underline{\alpha}^3}{3L_H^3}, \frac{9\gamma\beta^3}{2(L_H\gamma_D^3+\gamma)^3}\right\}\epsilon^{3/2}, \\ 
\end{aligned}
\end{equation*}
and the iteration for backtracking line search is at most
\begin{equation*}
\begin{aligned}
j_N \leq \left \lceil \log_\beta\left(\frac{3\delta\nu}{L_H\gamma_D^3+\gamma}\right) \right\rceil
    & = \left \lceil \log_\beta\left(\frac{3\sqrt{\epsilon}\nu}{L_H\gamma_D^3+\gamma}\right) \right\rceil.
\end{aligned}
\end{equation*}
When the algorithm terminates, by means of Lemma \ref{lemma11} we have
\begin{align}
\|\overline{g}_{k+1}\| &\leq \frac{L_H^2\gamma_D}{4}\Delta^2 + 2 U_H \gamma_D^2 \delta \Delta \notag\\
                      &= \frac{4U_H\gamma_D^2\epsilon}{L_H} + \gamma_D \epsilon \notag\\
                      &= O(\epsilon)
\label{theorem1.1}                     
\end{align}
and
\begin{align}
&\lambda_1 (\overline{B}_{k+1}) \notag \\
& \geq - \left(L_H\gamma_D^3 + U_H\gamma_D + 4U_H\gamma_D \delta\right) \Delta  - \left( \frac{L_H^2}{2}U_H \gamma_D +\frac{L_H}{2}\gamma_D^2 +2U_H\delta\gamma_D^2\right)\Delta^2 -\delta \notag \\
& = -\frac{8U_H\gamma_D^2\epsilon^{3/2}}{L_H^2} - \left(\frac{8U_H\gamma_D+2\gamma_D^2}{L_H} + 2U_H\gamma_D\right)\epsilon - \left(\frac{2U_H\gamma_D}{L_H}+ 2\gamma_D^3 + 1\right)\sqrt{\epsilon}  \notag \\
&= \Omega(-\sqrt{\epsilon}).
\label{theorem1.2}
\end{align}

It follows \eqref{theorem1.1} and \eqref{theorem1.2} that the next iterate $x_{k+1}$ satisfied the Definition \ref{definition2} is already a SOSP. It should be noted that the total decreasing amount if the objective function value cannot exceed $f(x)-f_{\inf}$. So, the number of iterations for large step cases is upper bounded by 
\begin{equation*}
        O\left(\max\left\{\frac{3L_H^3}{4\gamma\underline{\alpha}^3}, \frac{2(L_H\gamma_D^3+\gamma)^3}{9\gamma\beta^3}\right\}\left \lceil \log_\beta\left(\frac{3\sqrt{\epsilon}\nu}{L_H\gamma_D^3+\gamma}\right) \right\rceil\left(f(x_1) - f_{\inf}\right)\epsilon^{-3/2}\right).
    \end{equation*}
Meanwhile, it is also the iterative complexity of our algorithm. Since $\beta<1$, this complete the proof.
\end{proof}

\section{The local convergence}\label{section5}
In this section, the local convergence analysis of SOBASIP will be provided. In particular, when $x_k$ is sufficiently close to a SOSP $x_*$, we will show that $\|\overline{d}_k\|\leq \Delta$ is always holds. This conclusion implies that the stepsize $\alpha_k$
is always equal 1, and the line search process is not required. Consequently, the algorithm SOBASIP attains a local superlinear convergence rate through the setting of perturbation parameter $\delta=0$ for the subsequent iterations.

We first make some standard assumptions \cite{Nesterov4} to enhance the local convergence analysis.\\
\noindent \(\mathbf{(A4)}\) \quad Algorithm \ref{SOBASIP} converges to a strict local optimum $x_*$ satisfying that $ \overline{g}(x_*)=0$ and $\overline{B}(x_*) \succ 0$.

From (A4), we immediately realise that there is a small neighborhood for some $r>0$ and $\mu>0$ such that
\begin{equation}\label{R}
\overline{B}(x)\succeq \mu \cdot I \quad \text{for any} \quad  x\in B(x_*,r).
\end{equation}
To prove local superlinear convergence rate, we assume here that $\{D^{-1}(x)\}$ is $L_{D}$-Lipschitz continuous.\\
\noindent \(\mathbf{(A5)}\) \quad $\{D^{-1}(x)\}$ is Lipschitz continuous on $B(x_*,r)$, i.e., there exists a constant $L_D>0$ such that
\begin{equation}\label{D.lipschitz}
        \|D^{-1}(x) - D^{-1}(y)\| \le L_D \|x-y\|, ~\forall x, y \in B(x_*,r),
\end{equation}
\begin{lemma}\label{lemma12}
Suppose that assumption (A4) holds, then $t_k\neq 0$ for sufficiently large $k$.
\end{lemma}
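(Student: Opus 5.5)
Under (A4) the iterates converge to $x_*$, so for all sufficiently large $k$ we have $x_k\in B(x_*,r)$, and by \eqref{R} $\overline{B}_k\succeq \mu I$. The plan is a contradiction argument built on Lemma \ref{lemma3}(1): if $t_k=0$ at such an iterate, then $(-\theta_k,s_k)$ is an eigenpair of $\overline{B}_k$, i.e.
\begin{equation*}
\overline{B}_k s_k=-\theta_k s_k,\qquad s_k\neq 0,
\end{equation*}
where $s_k\neq 0$ follows from $\|[s_k;t_k]\|=1$ in \eqref{lemma2.3} together with $t_k=0$.

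First I invoke Lemma \ref{lemma2}, which gives $\theta_k>\delta\ge 0$. Hence $-\theta_k<0$ would be an eigenvalue of $\overline{B}_k$. This contradicts $\overline{B}_k\succeq\mu I$ with $\mu>0$. Concretely, multiplying the eigen-equation by $s_k^T$ gives
\begin{equation*}
\mu\|s_k\|^2\le s_k^T\overline{B}_ks_k=-\theta_k\|s_k\|^2<0,
\end{equation*}
which is impossible.

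The only subtlety is the case $\overline{g}_k=0$, where Lemma \ref{lemma1}, and hence the strict inequality $\theta_k>\delta$ in Lemma \ref{lemma2}, was derived under $\overline{g}_k\neq0$. In that case I argue directly. The first equation of \eqref{lemma3.1} together with the optimality condition \eqref{lemma2.1} still gives $\overline{B}_k+\theta_k I\succeq 0$ and $(\overline{B}_k+\theta_kI)s_k=0$ when $t_k=0$. Since $-\theta_k=\lambda_1(F_k)\le -\delta\le 0$ (the $(n+1,n+1)$ entry of $F_k$ is $-\delta$), the same contradiction $\mu\|s_k\|^2\le-\theta_k\|s_k\|^2\le0$ follows. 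Alternatively, Lemma \ref{lemma6.add}(1) applies directly, since $\lambda_1(\overline{B}_k)\ge\mu>-\delta$, and yields $t_k=1\ne0$.

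The main (mild) obstacle is justifying that $x_k\in B(x_*,r)$ eventually, which is exactly what (A4) provides by assuming convergence to $x_*$. I would state this explicitly and note that, since $\overline{B}$ may be discontinuous through $v$, we rely on \eqref{R} as given rather than on continuity of $\overline{B}$ at $x_*$.
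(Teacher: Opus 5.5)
Your proposal is correct and follows the paper's argument: assume $t_k=0$, use Lemma~\ref{lemma3}(1) to see that $-\theta_k$ is an eigenvalue of $\overline{B}_k$, and contradict $\overline{B}_k\succeq\mu I$ using $\theta_k>0$ from Lemma~\ref{lemma2}. Your separate treatment of $\overline{g}_k=0$, where Lemma~\ref{lemma1} and hence the strict bound $\theta_k>\delta$ are unavailable, is extra care the paper omits; in that case $\theta_k\ge 0$ from dual feasibility, together with $s_k\neq 0$, already suffices.
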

\begin{proof}
We prove it by contradiction. Suppose that $t_k=0$. Then lemma \ref{lemma3} shows that $(-\theta_k, s_k)$ is the eigenpair of the matrix $\overline{B}_k$, implying that
\begin{equation*}
\lambda_1(\overline{B}_k) \leq -\theta_k.
\end{equation*}
By recalling the lemma \ref{lemma2}, we have $\theta_k>0$, and further $\lambda_1(\overline{B}_k)<0$. This conclusion contradicts $\overline{B}_k \succ 0$. The proof is complete.
\end{proof}
The following lemma shows that $\overline{d}_k$ generated by SOBASIP will eventually reduces to the small value case for sufficiently large $k$. Therefore, we choose $\alpha_k$ as 1 and further update to obtain the iteration point $x_{k+1}=x_k+D_k^{-1}\overline{d}_k$.
\begin{lemma}\label{lemma13}
Suppose that assumption (A4) holds. For sufficiently large $k$, we obtain $\|\overline{d}_k\| \leq \Delta$.
\end{lemma}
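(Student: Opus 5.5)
By Lemma~\ref{lemma12}, $t_k\neq 0$ for all sufficiently large $k$. So $\overline{d}_k=s_k/t_k$ is well defined and, by \eqref{lemma3.3} in Lemma~\ref{lemma3}, satisfies
\[
(\overline{B}_k+\theta_k I)\overline{d}_k=-\overline{g}_k .
\]
I will bound $\|\overline{d}_k\|$ directly by $\|\overline{g}_k\|$ and then use $\overline{g}_k\to 0$. In this section $\delta$ is set to $0$ once the iterates are near $x_*$ (or, more generally, $\delta\ge 0$ is fixed). The argument uses only $\theta_k>\delta\ge 0$ from Lemma~\ref{lemma2}.

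\emph{Step 1: uniform positive definiteness.} By (A4), $x_k\to x_*$, so for $k$ large we have $x_k\in B(x_*,r)$. Then \eqref{R} gives $\overline{B}_k\succeq \mu I$. Since $\theta_k>0$, this yields $\overline{B}_k+\theta_k I\succeq(\mu+\theta_k)I\succ \mu I$, and in particular the matrix is invertible with
\[
\|(\overline{B}_k+\theta_k I)^{-1}\|\le 1/\mu .
\]

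\emph{Step 2: bound on the direction.} From the linear system,
\[
\|\overline{d}_k\|\le \|\overline{g}_k\|/(\mu+\theta_k)\le \|\overline{g}_k\|/\mu .
\]

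\emph{Step 3: $\overline{g}_k\to 0$.} The map $x\mapsto D(x)^{-2}g(x)$ is continuous, as noted after \eqref{sys}, and vanishes at $x_*$. I will need $\overline{g}(x)=D(x)^{-1}g(x)$ to be continuous at $x_*$ with $\overline{g}(x_*)=0$. Its $i$-th component is $\operatorname{sign}\cdot|v^i|^{1/2}g^i$.
\begin{itemize}
\item If $g^i_*\neq 0$, the sign of $g^i$ is locally constant, so $v^i$ is continuous; complementarity forces $|v^i(x_*)|=0$, hence the component tends to $0$.
\item If $g^i_*=0$, the factor $|v^i|^{1/2}$ is bounded near $x_*$ (using (A3) and $\|D_k^{-1}\|\le\gamma_D$), so the product tends to $0$.
\end{itemize}
Combined with (A4), which directly gives $\overline{g}(x_*)=0$, this shows $\|\overline{g}_k\|\to 0$.

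\emph{Step 4: conclusion.} For all large $k$, $\|\overline{g}_k\|\le \mu\Delta$, so $\|\overline{d}_k\|\le\Delta$. Equivalently, $|t_k|\ge \sqrt{1/(1+\Delta^2)}$, since $\|\overline{d}_k\|^2=(1-t_k^2)/t_k^2$.

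The main obstacle is Step 3, the continuity of $\overline{g}$ at $x_*$, because $v$ is discontinuous where $g^i$ changes sign. I will handle it componentwise using the boundedness of $D_k^{-1}$ from (A3), as sketched above. Note also that $\Delta$ is a fixed positive constant ($\Delta=2\sqrt{\epsilon}/L_H$), so the threshold $\mu\Delta$ is positive and independent of $k$.
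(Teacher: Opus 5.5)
Your proposal is correct and follows the paper's proof essentially step for step. Lemma~\ref{lemma12} gives $t_k\neq 0$, \eqref{lemma3.3} gives $\overline{d}_k=-(\overline{B}_k+\theta_k I)^{-1}\overline{g}_k$, \eqref{R} yields $\|\overline{d}_k\|\le\|\overline{g}_k\|/(\mu+\theta_k)\le\|\overline{g}_k\|/\mu$, and $\overline{g}_k\to 0$ finishes the argument. Your componentwise continuity argument in Step 3 only makes explicit what the paper asserts in one line (``(A4) implies $\|\overline{g}_k\|\to 0$''). One small slip there: the $i$-th component is simply $|v^i|^{1/2}g^i$, with no sign factor, and this does not affect the argument.
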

\begin{proof}
It is clear that $x_k$ arrives at the neighborhood of $x_*$ for sufficiently large $k$, so both $\overline{B}_k$ and $\overline{B}_k+\theta_kI$ are nonsingular. Then, combining the conclusion in lemma \ref{lemma12} and the equation \eqref{lemma3.3} in lemma \ref{lemma3}, we have
\begin{equation*}
\overline{d}_k=-\left(\overline{B}_k+\theta_kI\right)^{-1}\overline{g}_k,
\end{equation*}
and further
\begin{equation}\label{lemma13.1}
\begin{aligned}
\|\overline{d}_k\| &\leq \|(\overline{B}_k+\theta_k I)^{-1}\| \|\overline{g}_k\| \\
& \leq \frac{\|\overline{g}_k\|}{\mu+\theta_k} \leq \frac{\|\overline{g}_k\|}{\mu}.
\end{aligned}
\end{equation}
The second inequality holds due to $\overline{B}_k \succeq \mu I $ and the last inequality follows from $\theta_k >0$. Otherwise, (A4) implies that 
\begin{equation*}
\|\overline{g}_k\| \to 0 \quad \text{as} \quad k \to \infty.
\end{equation*}
In other words, there exists a sufficiently large $K\geq0$, such that
\begin{equation}\label{lemma13.2}
\|\overline{g}_k\| \leq \Delta \mu, \forall k\geq K.
\end{equation}
By making use of \eqref{lemma13.1} and \eqref{lemma13.2}, the desired conclusion $\|\overline{d}_k\| \leq \Delta$ holds.
\end{proof}
In the local phase, we set the perturbation parameter $\delta=0$. Furthermore, the subproblem \eqref{hsodm.subproblem} that we need to solve transforms into 
\begin{equation}
    \label{hsodm.subproblem.loc}
    \begin{aligned}
        \min_{\|[s; t]\| \le 1}
        \psi_k(s, t; 0) := ~ \begin{bmatrix}
        s\\ t
    \end{bmatrix}^T
    \begin{bmatrix}
        \overline{B}_k   & \overline{g}_k     \\
        \overline{g}_k^T & 0
    \end{bmatrix}
    \begin{bmatrix}
        s \\ t
    \end{bmatrix},~ s\in \mathbb{R}^n, t \in \mathbb{R},
    \end{aligned}
\end{equation}
Here, we also denote $[s_k;t_k]$ as the optimal solution to \eqref{hsodm.subproblem.loc}. Combine with the above results, we prove that SOBASIP achieves a local superlinear convergence rate in the following theorem.
\begin{theorem}
    \label{theorem2}
    Suppose that assumptions (A1)-(A5) hold. For sufficiently large $k$, the Algorithm \ref{SOBASIP} is superlinearly convergent to $x_*$, that is,
     $$
        \|{x_{k+1}-x_*} \| \le O\left(\|x_k - x_*\|^{2}\right).
     $$
\end{theorem}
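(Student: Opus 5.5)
We show that, once $\delta=0$ and $k$ is large, the SOBASIP step is a Newton step for $\overline{B}_k\overline{d}=-\overline{g}_k$, perturbed by a term $\theta_k$ that is of second order in $\|\overline{g}_k\|$. Standard Newton estimates together with (A2) and (A5) then give the quadratic bound.

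\textbf{Step 1: the step is a perturbed Newton step.} By Lemma \ref{lemma12} and Lemma \ref{lemma13}, for sufficiently large $k$ we have $t_k\neq 0$, $\|\overline{d}_k\|\le\Delta$, $\alpha_k=1$ and $x_k\in B(x_*,r)$. Then \eqref{lemma3.3} with $\delta=0$ gives
\[
\left(\overline{B}_k+\theta_k I\right)\overline{d}_k=-\overline{g}_k,
\qquad
\theta_k=-\overline{g}_k^T\overline{d}_k .
\]
Hence $\theta_k\le\|\overline{g}_k\|\|\overline{d}_k\|\le\|\overline{g}_k\|^2/\mu$, using \eqref{lemma13.1}. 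Converting back with $\overline{B}_k=D_k^{-1}(H_k+D_k\overline{C}_kD_k)D_k^{-1}$ gives
\[
\left(H_k+D_k\overline{C}_kD_k+\theta_k D_k^2\right)d_k=-g_k .
\]
Equivalently, multiplying by $D_k^{-2}$, this is the Newton system \eqref{niudunbu} for $G(x):=D(x)^{-2}g(x)=0$, plus the perturbation $\theta_k d_k$.

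\textbf{Step 2: the Newton-type error estimate.} Write $x_{k+1}-x_*=x_k-x_*+d_k$ and multiply by the scaled Jacobian $M_k:=D_k^{-1}(\overline{B}_k+\theta_kI)D_k^{-1}$. Since $g$ (and $G$) vanish at $x_*$ componentwise in the scaled sense, we get
\[
M_k(x_{k+1}-x_*)=\bigl[D_k^{-1}\overline{B}_kD_k^{-1}(x_k-x_*)-D_k^{-1}\overline{g}_k\bigr]+\theta_kD_k^{-2}(x_k-x_*).
\]
The first bracket is the residual of the Newton linearisation of $G$ at $x_k$ around $x_*$. Using $G(x_*)=0$ and \eqref{lemma6.1a}, the part coming from $H$ is bounded by $\tfrac{L_H}{2}\|x_k-x_*\|^2$. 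The part coming from the variation of the scaling $D^{-2}$ and of $\operatorname{diag}(g)J^v$ is bounded by $L_D\|x_k-x_*\|\,\|g_k-g_*\|=O(\|x_k-x_*\|^2)$, via (A5) and the Lipschitz continuity of $g$. The second term is $O(\|\overline{g}_k\|^2\|x_k-x_*\|)$, and $\|\overline{g}_k\|=O(\|x_k-x_*\|)$ since $\overline{g}$ is Lipschitz near $x_*$ by (A2) and (A5).

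\textbf{Step 3: conclusion.} By \eqref{R}, $\|(\overline{B}_k+\theta_kI)^{-1}\|\le 1/\mu$. Combining this with $\|D_k^{-1}\|\le\gamma_D$ and a uniform bound on $\|D_k\|$ near $x_*$ yields $\|M_k^{-1}\|=O(1)$. Therefore $\|x_{k+1}-x_*\|=O(\|x_k-x_*\|^2)$.

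\textbf{Main obstacle.} The hardest part is Step 2, handling the nonsmooth scaling: $v(x)$ switches branches as the signs of $g^i$ change. We would try first to argue that near $x_*$, for components with $g_*^i\neq0$, the branch of $v^i$ is locally constant, so $J^v$ is constant there and the linearisation is exact up to second order. For components with $g_*^i=0$, the factor $g^i$ itself is $O(\|x_k-x_*\|)$, so any branch switch in $J^v$ costs only $O(\|x_k-x_*\|)\cdot\|d_k\|=O(\|x_k-x_*\|^2)$. A second point needing care is the uniform bound on $\|D_k\|$. If some $x_*^i$ lies on a bound with $g_*^i\neq0$, this bound can fail, and we would then work instead with the scaled quantities $D_k(x_{k+1}-x_*)$ and use (A5).
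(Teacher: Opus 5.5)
Your route differs from the paper's and works once one slip in Step 2 is repaired. The paper splits $x_{k+1}-x_*$ into two pieces. The first is $\Phi(x_k)-x_*$, where $\Phi(x)=x-D^{-1}(x)\overline{B}(x)^{-1}\overline{g}(x)$ is the unperturbed scaled Newton map. The second is the shift $D_k^{-1}(\overline{d}_k+\overline{B}_k^{-1}\overline{g}_k)=-\theta_kD_k^{-1}\overline{B}_k^{-1}\overline{d}_k$. The paper bounds the first piece by a Taylor expansion after arguing $\mathbf{J}_\Phi(x_*)=0$ (via $\overline{C}(x_*)=0$). It bounds the second by $\Delta\gamma_D\|\overline{d}_k\|^2$, using $\|\overline{d}_k\|=O(\|x_k-x_*\|)$ from (A5).

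You instead keep $\theta_k$ inside the Newton matrix and use the classical error identity. You multiply the error by that matrix, bound the linearisation residual by Lipschitz continuity of $H$, and invert using $\overline{B}_k+\theta_kI\succeq\mu I$. This never differentiates $D^{-1}$ or $\overline{B}^{-1}$. So it avoids the smoothness of $\Phi$ that a quadratic Taylor remainder tacitly requires, and it is insensitive to the branch structure of $v$. The paper's split is more modular but leans on that extra regularity.

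\textbf{The slip.} Your $M_k$ is mis-scaled. Since $\overline{d}_k=D_kd_k$, the matrix $D_k^{-1}(\overline{B}_k+\theta_kI)D_k^{-1}$ does not send $d_k$ to $-D_k^{-1}\overline{g}_k$. So the identity displayed in Step 2 is false, and its bracket is not the Newton residual of $G$. Use the matrix you already derived in Step 1, $W_k:=D_k(\overline{B}_k+\theta_kI)D_k=H_k+D_k\overline{C}_kD_k+\theta_kD_k^2$. For it, $W_kd_k=-g_k$, $\|W_k^{-1}\|\le\gamma_D^2/\mu$, and
\[
W_k(x_{k+1}-x_*)=\bigl[H_k(x_k-x_*)-g_k\bigr]+D_k\overline{C}_kD_k(x_k-x_*)+\theta_kD_k^2(x_k-x_*).
\]

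\textbf{The bound on $\|D_k\|$.} You treat this as a possible obstacle, but (A1) supplies it. $X$ is closed and contained in $\mathrm{int}(\mathcal{F})$, so $x_*\in\mathrm{int}(\mathcal{F})$ and the iterates stay a uniform distance from the bounds; hence $\|D_k\|$ is bounded. Moreover $D^{-1}(x_*)$ is nonsingular, so $\overline{g}(x_*)=0$ gives $g(x_*)=0$. The three terms then behave as follows:
\begin{itemize}
\item the bracket is at most $\frac{L_H}{2}\|x_k-x_*\|^2$ by \eqref{lemma6.1a};
\item $\|\overline{C}_k\|\le\|g_k\|=O(\|x_k-x_*\|)$ makes the middle term $O(\|x_k-x_*\|^2)$, whatever branch $v$ is on;
\item your bound $\theta_k\le\|\overline{g}_k\|^2/\mu$ makes the last term $O(\|x_k-x_*\|^3)$.
\end{itemize}
The boundary case you worry about, and the fallback to $D_k(x_{k+1}-x_*)$, cannot arise under the stated hypotheses. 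The $L_D$ term in your Step 2 is also unnecessary.
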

\begin{proof}
By lemma \ref{lemma12}, we have $t_k=0$. Because we set $\delta=0$, we have the following equations from \eqref{lemma3.3}
\begin{equation}\label{theorem2.1}
 \overline{g}_k^T \overline{d}_k = -\theta_k \quad \text{and} \quad \left(\overline{B}_k + \theta_k I\right) \overline{d}_k = - \overline{g}_k,
\end{equation}
which implying that 
%\begin{equation}\label{theorem2.2}
%    \begin{aligned}
%        \|D_k^{-1}\overline{B}_k^{-1} \overline{g}_k + D_k^{-1}\overline{d}_k \| &\overset{\rm{\eqref{theorem2.1}}} =  \| - \theta_k  D_k^{-1} \overline{B}_k^{-1} \overline{d}_k \|\\
%                                & \leq  \|B_k^{-1}  \| \cdot | \theta_k | \|  D_k^{-1} \| \| \overline{d}_k \| \\
%                                & \overset{\rm{\eqref{theorem2.1}}}\leq \frac{\gamma_D}{\mu} \| \overline{g}_k\| \| \overline{d}_k \|^2.\\
%                                & \overset{\rm{\eqref{lemma13.2}}} \leq \Delta \gamma_D  \| \overline{d}_k \|^2.
%    \end{aligned}
%\end{equation}
\begin{eqnarray}\label{theorem2.2}
\|D_k^{-1}\overline{B}_k^{-1} \overline{g}_k + D_k^{-1}\overline{d}_k \| 
&\overset{\rm{\eqref{theorem2.1}}} =&  \| - \theta_k  D_k^{-1} \overline{B}_k^{-1} \overline{d}_k \|  \notag\\
&\leq&  \|B_k^{-1}  \| \cdot | \theta_k | \|  D_k^{-1} \| \| \overline{d}_k \| \notag \\
&\overset{\rm{\eqref{theorem2.1},\eqref{R}}}\leq& \frac{\gamma_D}{\mu} \| \overline{g}_k\| \| \overline{d}_k \|^2 \notag \\
&\overset{\rm{\eqref{lemma13.2}}} \leq& \Delta \gamma_D  \| \overline{d}_k \|^2.
\end{eqnarray}
In what follows, we prove that
\begin{equation}\label{theorem2.3}
\|x_{k} - D_k^{-1}\overline{B}_k^{-1} \overline{g}_k - x_*\| = O\left(\|x_k - x_*\|^{2}\right).
\end{equation}
Define $\Phi(x)=x-D^{-1}(x)\overline{B}^{-1}(x) \overline{g}(x)$. Obviously, $\Phi(x_*) =x_*$ because of $\overline{g}(x_*)=0$. The Taylor expansion of the $\Phi(x_k)$ at $x_*$ is as follows
\begin{equation*}
\Phi(x_k) = \Phi(x_*) + \mathbf{J}_{\Phi}(x_*)\left(x_k-x_*\right)+O\left(\|x_k-x_*\|^2\right).
\end{equation*}
In fact, $\mathbf{J}_{\Phi}(x_*)=0$. Before presenting this result, we first show that 
\begin{equation}\label{theorem2.4}
\overline{C}(x_*)=\operatorname{diag}(g_*^{i})J^v(x_*)=\operatorname{diag}\bigg(\frac{\partial \left| v^i(x_*) \right|}{\partial x_*^i} \cdot g_*^i\bigg)=0.
\end{equation}
Case 1 ($x_*^i$ satisfies $l^i<x_*^i<u^i$).

By \eqref{first-order}, we immediately know that $g_*^{i}=0$, which leads to $\frac{\partial \left|v^i(x_*)\right|}{\partial x_*^i} \cdot g_*^i=0$ .\\
Case 2 ($x_*^i$ satisfies $x_*^i=u^i$ or $x_*^i=l^i$).

From what has been stated earlier, $J^v(x_*)$ is the Jacobian matrix of $|v(x)|$ at $x_*$, and based on the definition of the $v(x)$, we readily obtain that $\frac{\partial \left|v^i(x_*)\right|}{\partial x_*^i}=0$. This also means that $\frac{\partial \left|v^i(x_*)\right|}{\partial x_*^i} \cdot g_*^i=0$.\\
Through the combination of the two cases discussed above, we demonstrate that $\overline{C}(x_*)=0$, a conclusion that further implies 
\begin{equation}\label{theorem2.5}
\overline{B}(x_*)=D^{-1}(x_*)H(x_*)D^{-1}(x_*). 
\end{equation}
Now, we return to illustrate the fact that $\mathbf{J}_{\Phi}(x_*)=0$.
\begin{eqnarray*}
&&\mathbf{J}_{\Phi}(x_*) \\
&=& \frac{\partial}{\partial x}\left(x-D^{-1}(x)\overline{B}^{-1}(x) \overline{g}(x)\right)\bigg|_{x=x_*} \notag\\
&=&I-\frac{\partial}{\partial x}\left(D^{-1}(x)\overline{B}^{-1}(x) \overline{g}(x)\right)\bigg|_{x=x_*}\notag\\
&=&I-\frac{\partial (D^{-1}(x)\overline{B}^{-1}(x))}{\partial x}\bigg|_{x=x_*}\overline{g}(x_*)+ D^{-1}(x_*)\overline{B}^{-1}(x_*)\frac{\partial \overline{g}(x) }{\partial x}\bigg|_{x=x_*}\notag\\
&=&I-D^{-1}(x_*)\overline{B}^{-1}(x_*)\frac{\partial \overline{g}(x) }{\partial x}\bigg|_{x=x_*}\notag\\
&=&I-D^{-1}(x_*)\overline{B}^{-1}(x_*)\left(\frac{\partial D^{-1}(x)}{\partial x}\bigg|_{x=x_*} g(x_*)+D^{-1}(x_*)\frac{\partial g(x)}{\partial x}\bigg|_{x=x_*}\right)\notag\\
&=&I-D^{-1}(x_*)\overline{B}^{-1}(x_*)D^{-1}(x_*)H(x_*)\notag\\
&\overset{\rm{\eqref{theorem2.5}}}=&I-D^{-1}(x_*)\overline{B}^{-1}(x_*)\overline{B}(x_*)D(x_*)\notag\\
&=&0.
\end{eqnarray*}
Futhermore,
\begin{equation*}
\Phi(x_k) =x_k-D^{-1}(x_k)\overline{B}^{-1}(x_k) \overline{g}(x_k) = x_*+O\left(\|x_k-x_*\|^2\right).
\end{equation*}
Based on the analysis above, we obtain \eqref{theorem2.3}.

Recall that in lemma \ref{lemma13}, we have
\begin{equation*}
\|\overline{d}_k\|=\|-(\overline{B}_k+\theta_kI)^{-1}\overline{g}_k\|.
\end{equation*}
Moreover, when it is combined with $\overline{g}(x_*)=0$, the above equation becomes 
%\begin{eqnarry}
%\|\overline{d}_k\|&=&\|-(\overline{B}_k+\theta_kI)^{-1}\overline{g}_k+(\overline{B}_k+\theta_kI)^{-1}\overline{g}(x_*)\| \nonumber\\
%&\leq& \|(\overline{B}_k+\theta_kI)^{-1}\| \|\overline{g}(x_*)-\overline{g}_k\|\\
%&\leq& \frac{1}{\mu+\theta_k} \|D^{-1}_k {g}_k - D^{-1}(x_*){g}(x_*)\|\\
%&\leq& \frac{1}{\mu} \|D^{-1}_k {g}_k - D^{-1}(x_*){g}(x_*)\|\\
%&=& \frac{1}{\mu} \|D^{-1}_k {g}_k -D^{-1}_kg(x_*)+D^{-1}_kg(x_*)- D^{-1}(x_*){g}(x_*)\|\\
%&=& \frac{1}{\mu} \bigg[\|D^{-1}_k\| \|{g}_k -g(x_*)\|+ \|D^{-1}_k- D^{-1}(x_*)\| \|{g}(x_*)\|\bigg]\\
%&\leq& \frac{1}{\mu} \bigg[\gamma_D (\|H_k\|\|x_k -x_*\| + \frac{L_H}{2}\|x_k -x_*\|^2) +L_DU_g\|x_k- x_*\| \bigg]\\
%&\leq&\frac{1}{\mu} \bigg[\gamma_D (U_H\|x_k -x_*\| + \frac{L_HR}{2}\|x_k -x_*\|) +L_DU_g\|x_k- x_*\| \bigg],
%\end{eqnarry}\label{theorem2.4}
\begin{eqnarray*}
&&\|\overline{d}_k\| \\
&=& \| -(\overline{B}_k + \theta_k I)^{-1} \overline{g}_k + (\overline{B}_k + \theta_k I)^{-1} \overline{g}(x_*) \|  \\
&\leq& \| (\overline{B}_k + \theta_k I)^{-1} \| \| \overline{g}(x_*) - \overline{g}_k \| \\
&\leq& \frac{1}{\mu + \theta_k} \| D_k^{-1} g_k - D^{-1}(x_*) g(x_*) \|  \\
&\leq& \frac{1}{\mu} \| D_k^{-1} g_k - D^{-1}(x_*) g(x_*)\|  \\
&=& \frac{1}{\mu} \| D_k^{-1} g_k - D_k^{-1} g(x_*) + D_k^{-1} g(x_*) - D^{-1}(x_*) g(x_*) \|  \\
&\leq& \frac{1}{\mu} \left(\| D_k^{-1}\| \| g_k - g(x_*)\| + \| D_k^{-1} - D^{-1}(x_*)\| \| g(x_*) \| \right)  \\
& \overset{\rm{\eqref{lemma6.1a},\eqref{D.lipschitz}}} \leq& \frac{1}{\mu} \left( \gamma_D \left( \|H_k\| \|x_k - x_*\| + \frac{L_H}{2} \|x_k - x_*\|^2 \right) + L_D U_g \|x_k - x_*\| \right)  \\
&\leq& \frac{1}{\mu} \left( \gamma_D \left( U_H \|x_k - x_*\| + \frac{L_H r}{2} \|x_k - x_*\| \right) + L_D U_g \|x_k - x_*\| \right),
\end{eqnarray*}
where the last inequality holds since $x_k\in B(x_*,r)$ and $U_g=2U_H\gamma_D\delta\Delta$. Equivalently, we can obtain that
\begin{equation}\label{theorem2.6}
\|\overline{d}_k\| \leq O\left(\|x_k- x_*\|\right).
\end{equation}
It follows $x_{k+1}=x_k+D_k^{-1} \overline{d}_k$ from lemma \ref{lemma13}. Therefore,
\begin{eqnarray*}
 &&\|x_{k+1} - x_*\| \\
 & =& \|x_{k} +  D_k^{-1}\overline{d}_k + D_k^{-1}\overline{B}_k^{-1} \overline{g}_k - D_k^{-1}\overline{B}_k^{-1} \overline{g}_k - x_* \|  \\
 & \leq& \|x_{k} - D_k^{-1}\overline{B}_k^{-1} \overline{g}_k - x_*\| +  \|D_k^{-1}\overline{B}_k^{-1} \overline{g}_k + D_k^{-1}\overline{d}_k \|\\
 & \overset{\rm{\eqref{theorem2.2}}} \leq& \|x_{k} - D_k^{-1}\overline{B}_k^{-1} \overline{g}_k - x_*\| +  \Delta \gamma_D  \| \overline{d}_k \|^2\\
 & \overset{\rm{\eqref{theorem2.3},\eqref{theorem2.6}}} \leq& O\left(\|x_k-x_*\|^2\right).
\end{eqnarray*}
Thus, we obtain the desired result that the algorithm achieves local superlinear convergence rate.
\end{proof}

\section{Numerical Results}\label{section6}
In this section, we report the preliminary numerical results of SOBASIP (Algorithm \ref{SOBASIP}), which is implemented using MATLAB code and run under MATLAB(R2022b). The program runs on a desktop computer with the 11th Gen Intel(R) Core(TM) i5-1135G7 @ 2.40GHz   2.42 GHz.

During the algorithm implementation process, the parameters are selected as follows.

$\epsilon=10^{-6}$, $\nu=0,01$, $\delta=10^{-6}$, $\Delta=10^{-1}$, $\beta=0.5$, $\gamma=0.1$, $\tau=0.995$.

The numerical results of SOBASIP (Algorithm \ref{SOBASIP}) are presented in Table \ref{TABLE1}. where problems come from the testing problem set CUTEst \cite{gould26}. $N_{it}$, $N_f$, $N_g$ represent the number of iterations, the number of evaluations of $f(x)$, and the number of evaluations of $g(x)$, respectively. The columns $n$ is the number of variables.

\begin{center}
	{
		\begin{longtable}{l c c c c c c c}  
            \captionsetup{justification=raggedright,singlelinecheck=false}
			\multicolumn{8}{l}{\itshape Table \thetable{} Numerical experimental results of Algorithm \ref{SOBASIP}}\label{TABLE1}\\
            \hline 
			Problem  &$n$ &$N_{it}$ &$N_f$  &$N_g$ &$\|\overline{g}_k\|$ &$\lambda_1 (\overline{B}(x))$ &Cpu-time\\
			\hline
            \endfirsthead
%%            \multicolumn{6}{c}{TABLE \ref{TABLE1} continued}\\ 
%           \multicolumn{8}{l}{\itshape Table \thetable{} Numerical experimental results of Algorithm \ref{SOBASIP}}\\
            \hline
			Problem  &$n$ &$N_{it}$ &$N_f$  &$N_g$  &$\|\overline{g}_k\|$ &$\lambda_1 (\overline{B}(x))$ &Cpu-time \\
            \hline
            \endhead
            ALLINIT   &4 &6 &7 &7 &1.1518e-07  &1.0754e+01 &7.0796e-02 \\
%            BQPGASIM  &50 &1 &2 &2 & & &5.7322e-02 \\
            BQPIVAR   &1  &6 &7&7 &2.0736e-09  &1.0000e+00 &1.1669e-02 \\
            BIGGS5    &6  &20 &22 &21 & 4.6636e-07 &2.4072e-04 &1.6611e-01 \\
            BDEXP     &100  &30 &31 &31 & 8.9921e-07 &-7.7462e-08 &5.4854e-01 \\
            CAMEL6    &2   &6 &8 &7 &2.3270e-07 &2.2232e+01 &2.2502e-02\\
            HIMMELP1  &2    &10 &11 &11 &4.4474e-12  & 8.9953e+00 &3.3701e-02 \\
            HATFLDA   &4    &10 &11 &11 &7.3101e-07  &1.0489e-02 &3.8675e-02  \\
 %           HATFLDB   &4    &9 &10 &10 &1.1797e-01 \\
            HATFLDC   &25   &5 &6 &6 &5.5210e-12 &1.3333e+00 &6.2107e-02\\
            HS3MOD    &2    &22&23 &23 & 3.7215e-07 &1.0000e+00 &3.2903e-02 \\
            HS110     &50   &5 &6 &6 &6.6814e-13 & 4.8091e+01 &3.9415e-02  \\
            HS05      &2    &5 &6 &6 &1.5017e-07 &1.9329e+00 &9.0871e-01 \\
            HS25      &3    &16 &17 &17 &1.3808e-11  &6.1845e-04 &5.6836e-01 \\
            HS38      &4    &36 &50 &37 & 7.2335e-08 &7.9152e+00 &9.4834e-02\\
%            HS45      &5    &35 &36 &36 &4.2862e-01 \\
            JNLBRNGA  &16  &4 &5 &4 & 3.3079e-09 & 5.4742e-01 &2.8074e-02 \\
            JNLBRNGA  &100  &5 &6 &5 & 6.8249e-09&4.2717e-02 &3.3144e-01\\
            JNLBRNGA  &529  &6 &7 &6 &3.5165e-07 & 2.8141e-03 &1.8764e+00\\
            JNLBRNGA  &1024 &8 &9 &8 & 2.2888e-08 & 4.2473e-04 &5.2699e+00\\
            %JNLBRNGA  &5625 &13 &14 &13 &2.2121e+02\\
%            JNLBRNGA  &15625 &20 &21 &20 &3.2049e+03\\
            JNLBRNGB  &16  &6 &7 &6 & 2.7004e-07 &4.8544e+00 &2.9307e-02\\
            JNLBRNGB  &100  &11 &12 &11 & 3.8127e-09 &1.5563e-01 &1.2968e+00\\
            JNLBRNGB  &529  &21 &22 &21 & 7.5714e-07 &6.8365e-05 &8.8243e+00 \\
            JNLBRNGB  &1024  &28 &29 &28 & 7.0167e-08 &8.1481e-04 &4.9981e+01\\
            LINVERSE  &19  &17 &23 &18 &5.9162e-07 &-1.7191e-09 &5.3593e-01\\
            MCCORMCK  &10   &10 &11 &11 &1.3928e-08 &9.3466e-01 &1.8531e-01 \\
            MCCORMCK  &100   &17 &18 &18 &4.6360e-07 & 9.3466e-01 &4.2292e-01  \\
            NONSCOMP  &25   &27 &28 &28 & 4.6205e-07 &1.0954e-08 &2.2653e-01 \\
            OBSTCLAL  &100  &4 &5 &5 & 3.9018e-08 &2.8269e-02 &3.2353e-01\\
%            OBSTCLBL  &16   &3 &4 &4 &1.6282e-02 \\
%            OBSTCLBL  &100  &15 &16 &16 &8.4260e-01 \\
%            OBSTCLBL  &529  &28 &29 &29 &7.6546e+00 \\
%            OBSTCLBL  &1024  &29 &30 &30 &1.4808e+01\\
%            OBSTCLBU  &100  &35 &36 &36 &1.6107e+00 \\
            PROBPENL  &500  &8 &9 &9 & 3.1265e-06 &-1.1251e-06 &4.4423e+00\\
            PALMER1   &4    &20 &23 &20 &1.4806e-07 &-3.2480e-09 &4.9606e-01\\
            PALEMR2   &4    &19 &21 &20 & 3.7524e-08 &1.0230e-09 &2.3651e-01\\
            PALEMR3   &4    &61&65 &62 & 4.1870e-10 &6.6087e-12 &5.2469e-01  \\
            PALEMR4   &4    &81 &90&82& 4.1409e-08 & 1.9337e-10 &1.9247e-01 \\
            PSPDOC    &4    &10 &11 &11 & 4.1876e-09 &1.9806e-01 &3.2119e-02 \\
            SIMBQP    &2     &17 &18 &18 & 4.3017e-08 & 1.0000e+00 &3.0193e-02 \\
            S242     &3    &12&13 &13 &1.7379e-08 &-1.2226e-10 &5.4171e-02\\
            S328      &2   &6 &7 &7   &7.9221e-07 &2.1108e-01 &1.4485e-02 \\
%            S368      &8    &12 &13 &13 &2.2050e-01\\
%            TORSION1  &100  &8 &9 &9 &3.9894e-01\\
%            TORSION2  &100  &18 &19 &19 &8.3581-01\\
%            TORSION3  &100  &4 &5 &5 &1.8928e-01\\
%            TORSION6  &100  &13 &14 &14 &3.3813e-01\\
			\hline
		\end{longtable}
	}
\end{center}

\section{Conclusion}\label{section7}
In this paper, we extend the homogeneous second-order descent method for solving problems with bound constraints by proposing a second-order descent algorithm based on affine scaling interior-point methods. To remove bound constraints, an affine subproblem is constructed by using an affine matrix and the optimality conditions of problem \eqref{f}. 
Utilizing the homogenisation technique, an ordinary homogeneous model can be easily derived from the affine scaling subproblem, and it can be solved as an eigenvalue problem. The backtracking line search is employed to ensure a sufficient decrease of the objective function and further determine a new iteration point in Algorithm \ref{SOBASIP}.
Notably, the proposed algorithm attains a global iteration complexity of $O(\epsilon^{-3/2})$ for finding an $\epsilon$-approximate second-order stationary point, matching the iterative complexity of the HSODM proposed in \cite{zhang14}. Furthermore, the local convergence of SOBASIP is analysed under appropriate assumptions and the numerical results are reported indicating the practical viability of this approach.

\section*{Acknowledgments}
%The authors are very grateful to the editor and the referees, whose valuable suggestions and insightful comments helped to improve significantly the paper.\\
The authors are very grateful to the editor and the referees, whose valuable suggestions and insightful comments helped to improve significantly the paper. 

\section*{Funding}
Natural Science Foundation of Henan Province (252300421993); National Natural Science Foundation of China (12071133); Key Scientific Research Project for Colleges and Universities in Henan Province (25B110005).

%\bmhead{Data Availability} The manuscript does not contain any associated data.
\bibliographystyle{sn-mathphys-ay}
\bibliography{jieyueshu}
\end{document}